\newtheorem{definition}{Definition}
\newtheorem{theorem}{Theorem}
\begin{document}

\title{Stratified Gaussian Graphical Models}
\author{Henrik Nyman$^{1, \ast}$, Johan Pensar$^{1}$, Jukka Corander$^{2}$ \\
$^{1}$Department of Mathematics and Statistics, \AA bo Akademi University, Finland \\
$^{2}$Department of Mathematics and Statistics, University of Helsinki, Finland \\
$^{\ast}$Corresponding author, Email: hennyman@abo.fi}
\date{}

\maketitle

\begin{abstract}
Gaussian graphical models represent the backbone of the statistical toolbox for analyzing continuous multivariate systems. However, due to the intrinsic properties of the multivariate normal distribution, use of this model family may hide certain forms of context-specific independence that are natural to consider from an applied perspective. Such independencies have been earlier introduced to generalize discrete graphical models and Bayesian networks into more flexible model families. Here we adapt the idea of context-specific independence to Gaussian graphical models by introducing a stratification of the Euclidean space such that a conditional independence may hold in certain segments but be absent elsewhere. It is shown that the stratified models define a curved exponential family, which retains considerable tractability for parameter estimation and model selection.
\end{abstract}

\noindent Keywords: Bayesian Model Learning; Context-Specific Independence; Gaussian Graphical Model; Multivariate Normal Distribution.

\section{Introduction}
Since their original introduction in the 1970's, Gaussian graphical models (GGMs) are by now ubiquitous in statistical analysis of multivariate systems, given their beneficial characteristics regarding modularity and tractability of statistical inference, see \citet{Dempster72}, \citet{Whittaker90}, \citet{Lauritzen96}, \citet{Giudici99}, \citet{Wong03}, \citet{Atay05}, \citet{Jones05}, \citet{Li06}, \citet{Yuan07}, \citet{Carvalho09}, \citet{Sun12}. However, unlike their discrete counterparts, log-linear graphical models, GGMs do not allow for very flexible representation of marginal and conditional dependence between variables, since their characteristics are determined by the properties of the multivariate normal distribution. A particularly attractive generalization of log-linear graphical models and Bayesian networks is to allow the dependence structure to be context-specific, such that an independence between a pair of variables may hold only when their neighbours attain certain values, i.e. a full conditional independence not being present. Such models have been considered both for directed graphs in \citet{Boutilier96}, \citet{Geiger96}, \citet{Chickering97}, and for undirected graphs in \citet{Corander03a}, \citet{Hojsgaard03, Hojsgaard04}. 

Here we adapt the concept of discrete stratified graphical models \citep{Nyman13} to the multivariate Gaussian family by introducing a stratification of the Euclidean space that specifies where context-specific independencies between variables are present and where absent. It is shown that this definition leads to a plausible characterization of the local influence of the neighbours without leading to complex mixture-type models which would be seriously challenging from the inference perspective. As demonstrated earlier by the discrete model families, context-specific independencies lend themselves easily to applications since it is fundamentally natural to consider the dependence between variables to be absent in a given context while being present elsewhere. We establish formally that the stratification leads to a curved exponential family, which retains considerable tractability in terms of parameter estimation and model selection. The remainder of the article is structured as follows. In Section \ref{secHeadSGGM} we introduce stratified Gaussian graphical models (SGGMs) and examine their statistical properties in detail. In Section 3 we consider inference for SGGMs and the last two sections provide illustrations and some concluding remarks, respectively.

\section{Stratified Gaussian graphical models}
\label{secHeadSGGM}
\subsection{Motivating the introduction of stratified Gaussian graphical models}
To provide an informal and intuitive introduction to the core ideas behind SGGMs, we start by considering the classic dataset concerning mathematics marks introduced by \cite{Mardia79}, see also \cite{Whittaker90} and \cite{Edwards00}. For the five variables, listed in Table \ref{mathVar} representing marks of students in different areas of mathematics, \cite{Whittaker90} presented the GGM with the dependence structure defined by the graph in Figure \ref{mathGGM}. \cite{Whittaker90} noted the central role of algebra in this correlation structure, it being connected to all the remaining variables. However, in analogy to discrete models, what if algebra were independent of mechanics provided that the mark for vectors is above a certain threshold, i.e. $X_{1} \perp X_{3} \mid X_{2}>a$? Alternatively, these two variables might also be independent if the third one belongs to a certain interval: $X_{1} \perp X_{3} \mid a<X_{2}<b$. Such context-specific independencies would appear reasonable in many applications, for instance, a certain signal may activate the dependence of other variables only once it reaches high enough a value. In Section 4 we show that the likelihood for the mathematics marks data in fact supports a local dependence structure of the kind hypothesized above.

\begin{table}[h]
\begin{center}
\begin{tabular}{lc}
\hline
Variable & Label \\
\hline
Mechanics & 1 \\
Vectors & 2 \\
Algebra & 3 \\
Analysis & 4 \\
Statistics & 5 \\
\hline
\end{tabular}
\caption{Variables in mathematics mark dataset.}
\label{mathVar}
\end{center}
\end{table}

\begin{figure}[h]
\begin{center}
\includegraphics{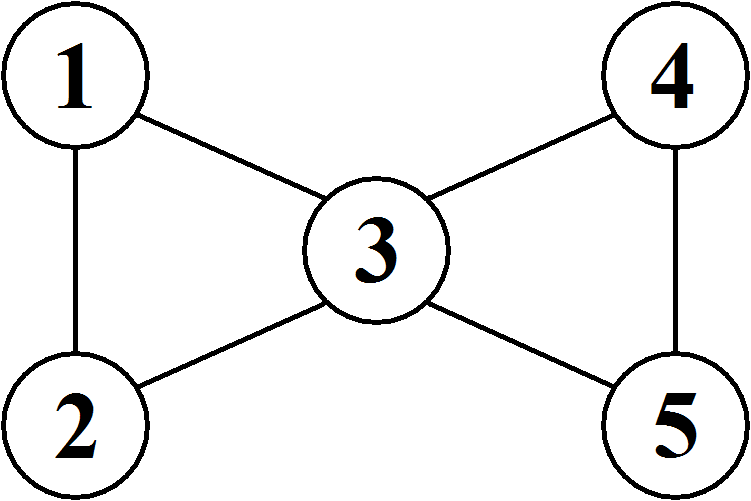}
\end{center}
\caption{Graph representing the dependence structure of the five variables in mathematics mark data.}
\label{mathGGM}
\end{figure}

The standard restrictions on the inverse covariance structure defined by GGMs lead to an exponential family of distributions. Despite of its versatility, such families lack the flexibility to encode context-specific independencies of the type considered above. In contrast, when distinct inverse covariance structures are specified over particular segments of the Euclidean space, one arrives at a more flexible family of distributions which still remains tractable in terms of inference, as shown later in more detail.

\subsection{Notation and preliminaries}
In order to define SGGMs we need the following basic concepts pertaining to Gaussian graphical models. For a more comprehensive treatment, see, for instance, \cite{Lauritzen96} or \cite{Whittaker90}. A $d$-dimensional random vector $X_{\Delta} = (X_{1}, \ldots, X_{d})$ follows a multivariate Gaussian distribution $N(\mu,\Sigma)$ if there exists a mean vector $\mu\in\mathcal{\mathbb{R}}^{d}$ and a positive definite covariance matrix $\Sigma \in \mathcal{\mathbb{R}}^{d\times d}$ such that $X_{\Delta}$ has the probability density function
\[
f_{\mu,\Sigma}(x)=(2\pi)^{-d/2}|K|^{1/2}e^{-1/2(x-\mu)^T K(x-\mu)},
\]
where $K=\Sigma^{-1}$ is the precision matrix of the distribution. This will be denoted $X_{\Delta} \sim N(\mu,\Sigma)$. Independence between two variables or alternatively two sets of variables $X_A$ and $X_B$ can be readily deduced from $\Sigma$, since $\sigma_{\delta,\gamma}=0$ for every $X_{\delta} \in X_A$ and $X_{\gamma} \in X_B$ if and only if the two sets of variables $X_A$ and $X_B$ are marginally independent. Similarly, conditional independence between two variables $X_{\delta}$ and $X_{\gamma}$ can be deduced from the precision matrix, since
\begin{equation}
\label{prec0}
X_{\delta} \perp X_{\gamma} \mid X_{\Delta} \backslash \{X_{\delta}, X_{\gamma}\} \Leftrightarrow k_{\delta, \gamma} = 0.
\end{equation}

Next we define the necessary concepts from graph theory relevant for SGGMs. An undirected graph $G(\Delta, E)$ consists of a set of nodes $\Delta$ and of a set of undirected edges $E\subseteq\{\Delta\times\Delta\}$. For a subset of nodes $A \subseteq \Delta$, $G_{A} = G(A, E_{A})$ is a subgraph of $G$, such that the nodes in $G_{A}$ equal $A$ and the edge set comprises those edges of the original graph for which both nodes are in $A$, i.e. $E_{A} = \{A \times A\} \cap E$. Two nodes $\gamma$ and $\delta$ are adjacent in a graph if $\{\gamma,\delta\}\in E$, that is an edge exists between them. A path in a graph is a sequence of nodes such that two consecutive nodes are adjacent. A cycle is a path that starts and ends with the same node. A chord in a cycle is an edge between two non-consecutive nodes in the cycle. A graph is defined as decomposable if all cycles found in the graph containing four or more unique nodes contain at least one chord. Two sets of nodes $A$ and $B$ are said to be separated by a third set of nodes $S$ if every path between nodes in $A$ and nodes in $B$ contains at least one node in $S$. A graph is defined as complete when all pairs of nodes in the graph are adjacent. A clique in a graph is a set of nodes $C$ such that the subgraph $G_{C}$ is complete and there exists no other set $C^{\ast}$, such that $C \subset C^{\ast}$ and $G_{C^{\ast}}$ is also complete. The set of cliques in a graph are denoted by $\mathcal{C}(G)$. For a decomposable graph the set of separators $\mathcal{S}(G)$ can be obtained through intersections of the cliques of $G$ ordered in terms of a junction tree, see e.g. \cite{Golumbic04}.

Associating each node $\delta \in \Delta$ with a variable $X_{\delta}$, a graphical model is defined by a pair $(G, P_{\Delta})$, where $P_{\Delta}$ is a probability distribution over the variables in $X_{\Delta}$ satisfying a set of restrictions induced by $G$. Given a graphical model it is possible to ascertain if two sets of random variables $X_A$ and $X_B$ are conditionally independent given a third set of variables $X_S$, due to the global Markov property
\[
X_A \perp X_B \mid X_S, \text{ if } S \text{ separates } A \text{ and }B \text{ in } G.
\]
From this property it immediately follows that if there exists no path between the nodes in $A$ and the nodes in $B$ then the two sets of variables $X_A$ and $X_B$ are independent of each other. If the graph $G$ is decomposable $P_{\Delta}$ factorizes as
\begin{equation}
\label{clisep}
P_{\Delta}(X_{\Delta}) = \frac{\prod_{C \in \mathcal{C}(G)}P_C(X_C)}{\prod_{S \in \mathcal{S}(G)}P_S(X_S)}
\end{equation}
\begin{figure}[h]
\begin{center}
\includegraphics{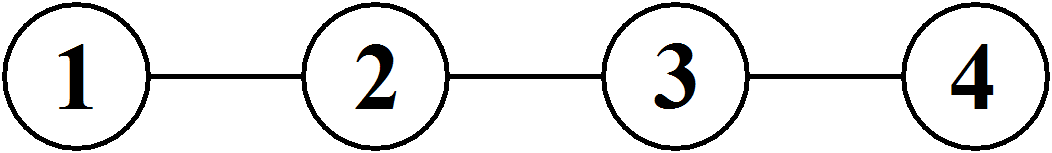}
\end{center}
\caption{Graph containing four nodes and three edges.}
\label{gm}
\end{figure}
If $P_{\Delta}$ is a multivariate Gaussian distribution, $(G, P_{\Delta})$ is a Gaussian graphical model. The restrictions induced by $G$ are manifested as restrictions on the covariance matrix. Consider the graph in Figure \ref{gm} and the conditional independencies of the form described in \eqref{prec0} that it induces. These conditional independencies are
\[
X_{1} \perp X_{3} \mid \{X_{2}, X_{4}\} \qquad X_{1} \perp X_{4} \mid \{X_{2}, X_{3}\} \qquad X_{2} \perp X_{4} \mid \{X_{1}, X_{3}\},
\]
which results in a precision matrix of the form
\[
K=
\begin{pmatrix}
k_{11} & k_{12} & 0 & 0\\
k_{12} & k_{22} & k_{23} & 0\\
0 & k_{23} & k_{33} & k_{34}\\
0 & 0 & k_{34} & k_{44}
\end{pmatrix}.
\]
A statement of conditional independence is such that it holds throughout the outcome space. Consider now the situation with $\Delta=\{1,2,3\}$ where the two variables $X_{2}$ and $X_{3}$ are independent only within a specific segment (or stratum) of the outcome space, for instance, given that the third variable $X_{1}$ is strictly positive, i.e. 
\[
X_{2} \perp X_{3} \mid X_{1}>0.
\]
Such a local independence restriction cannot be captured by standard GGMs, which provides the motivation to develop a class of more general models termed as stratified Gaussian graphical models.

\subsection{Stratified Gaussian graphical models}
\label{secSGGM} SGGMs belong to the class of context-specific models which allow for particular conditional independencies to be present only in a subset, or context, of the outcome space. The appearance and interpretation of an SGGM is quite similar to that of a GGM. For instance, in both types of models an edge between two nodes represents marginal dependence between two variables, conditional dependence is also modeled identically in GGMs and SGGMs. However, for SGGMs a context-specific independence can be introduced by assigning a specific condition to an edge in the graph. For example, in the previous section, where $\Delta=\{1,2,3\}$, the context-specific independence $X_{2} \perp X_{3} \mid X_{1}>0$ can be captured using the graph in Figure \ref{simpleSGGM}. The condition $X_{1}>0$ assigned to the edge $\{2, 3\}$ is referred to as a stratum.
\begin{figure}[h]
\begin{center}
\includegraphics{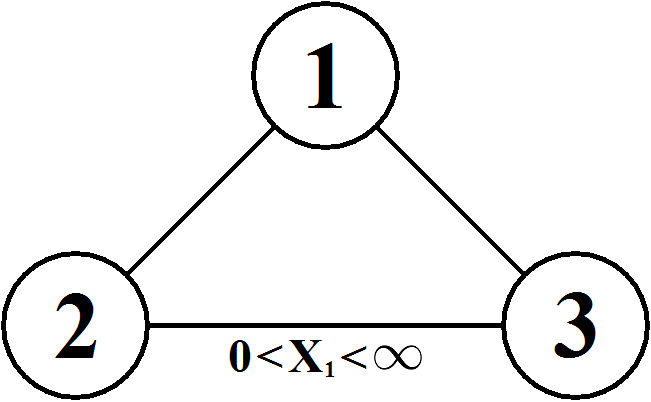}
\end{center}
\caption{Stratified graph over three variables inducing the context-specific independence $X_{2} \perp X_{3} \mid X_{1}>0$.}
\label{simpleSGGM}
\end{figure}
\begin{definition}[\textit{Stratum}]
\label{stratum} Let the pair $(G, P_{\Delta})$ be a graphical model. For all $\{\delta,\gamma\}\in E$, let $L_{\{\delta,\gamma\}}$ denote the set of nodes adjacent to both $\delta$ and $\gamma$. For a non-empty $L_{\{\delta,\gamma\}}$, define the stratum of the edge $\{\delta,\gamma\}$ as the subset $\mathcal{L}_{\{\delta,\gamma\}}$ of the outcome space of the variables in $X_{L_{\{\delta,\gamma\}}}$ such that $X_{\delta} \perp X_{\gamma} \mid X_{L_{\{\delta,\gamma\}}} = x_{L_{\{\delta, \gamma\}}}$, whenever $x_{L_{\{\delta,\gamma\}}} \in \mathcal{L}_{\{\delta, \gamma\}}$. Furthermore, $\mathcal{L}_{\{\delta,\gamma\}}$ must be definable by a union of sets that can be written
\begin{equation}
\{x_{L_{\{\delta,\gamma\}}}:\bigcap_{\zeta \in L_{\{\delta, \gamma\}}}a_{\zeta}<x_{\zeta}<b_{\zeta} \ \text{ for some constants } a_{\zeta}, b_{\zeta} \in \mathbb{R} \cup \{\infty, -\infty\} \}.
\label{stratumSet}
\end{equation}
\end{definition}
Given the definition of a stratum an SGGM is defined as follows. 
\begin{definition}[\textit{Stratified Gaussian graphical model}]
A stratified Gaussian graphical model is defined by the triple $(G, L, P_{\Delta})$, where $G$ is the underlying graph, $L$ is the joint collection of all strata $\mathcal{L}_{\{\delta,\gamma\}}$ for the edges of $G$, and $P_{\Delta}$ is a piecewise Gaussian distribution satisfying the restrictions induced by $(G, L)$. The pair $(G, L)$ is termed a stratified graph (SG). 
\end{definition}
In order to illustrate the difference between densities relating to GGMs and SGGMs consider two covariance matrices
\[
\Sigma^{(1)}=
\begin{pmatrix}
1 & 0.75 & 0.75\\
0.75 & 1 & 0.75\\
0.75 & 0.75 & 1
\end{pmatrix}
\qquad
\Sigma^{(2)}=
\begin{pmatrix}
1 & 0.75 & 0.75\\
0.75 & 1 & 0.5625\\
0.75 & 0.5625 & 1
\end{pmatrix},
\]
where $\Sigma^{(2)}$ is identical to $\Sigma^{(1)}$ except for the elements, $\sigma^{(2)}_{23} = \sigma^{(2)}_{32}$, which are altered to satisfy the condition $k^{(2)}_{23} = k^{(2)}_{32} = 0$. Let $\mu=(0,0,0)^T$ and let $X_{\Delta}$ be a random vector such that $X_{\Delta} \sim N(\mu, \Sigma^{(1)})$. Further, let $Y_{\Delta}$ be a random vector with the density
\[
f_Y(y)=
\begin{cases}
(2\pi)^{-3/2}|K^{(1)}|^{1/2}e^{-1/2(y-\mu)^T K^{(1)}(y-\mu)},\text{ if }y_{1} \leq 0,\\
(2\pi)^{-3/2}|K^{(2)}|^{1/2}e^{-1/2(y-\mu)^T K^{(2)}(y-\mu)},\text{ if }y_{1}>0,
\end{cases}
\]
where $K^{(1)}$ and $K^{(2)}$ are the inverse matrices of $\Sigma^{(1)}$ and $\Sigma^{(2)}$, respectively. It is obvious that $f_Y$ defines a proper probability distribution since it is strictly positive and integrates to one due to the fact that using either $\Sigma^{(1)}$ or $\Sigma^{(2)}$ the probabilities $P(Y_1 > 0) = P(Y_1 < 0) = 0.5$. Additionally, this distribution follows the dependence structure induced by the SG in Figure \ref{simpleSGGM}. Figure \ref{visual} illustrates the noticeable difference between the conditional distributions of $X_{1} \mid \{X_{2}, X_{3}\}$ and $Y_{1} \mid \{Y_{2}, Y_{3}\}$, and shows the potential of a stratum to modify the shape of the density function.
\begin{figure}[h]
\begin{center}
\includegraphics[width=0.9\textwidth]{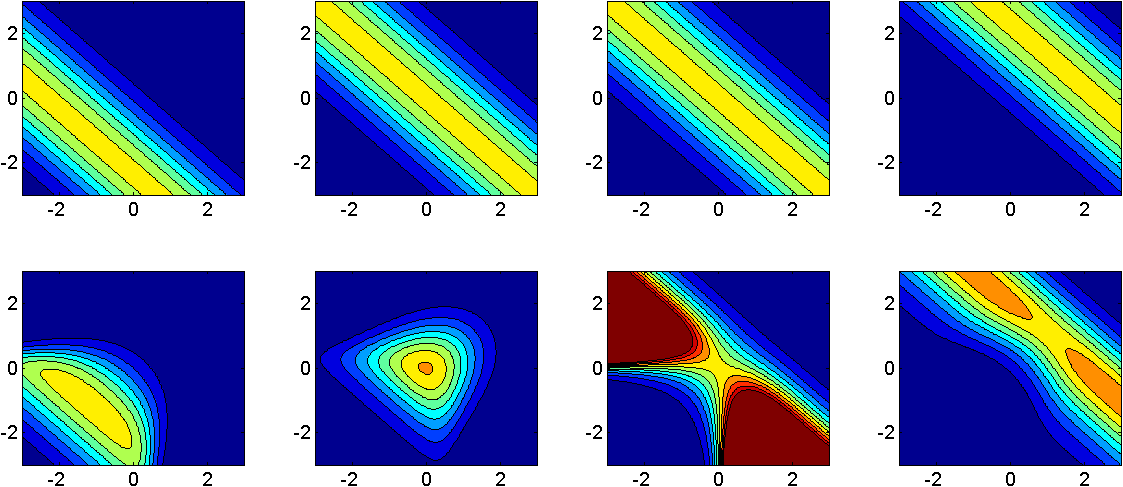}
\end{center}
\caption{Top row: Conditional density function for $X_{1}$ given $X_{2}$ and $X_{3}$ (x- and y-axis), for $x_{1}=-1$, $x_{1}=0$, $x_{1}=0.01$, and $x_{1}=1$. Bottom row: Conditional density function for $Y_{1}$ given $Y_{2}$ and $Y_{3}$ (x- and y-axis), for $y_{1}=-1$, $y_{1}=0$, $y_{1}=0.01$, and $y_{1}=1$.}
\label{visual}
\end{figure}

For an SGGM containing a single stratum the interpretation of the context-specific dependence structure is straightforward. However, for SGGMs containing several strata, the dependence structure can prove less intuitive. To demonstrate this, consider the stratified graph in Figure \ref{combine}a.
\begin{figure}[h]
\begin{center}
\includegraphics{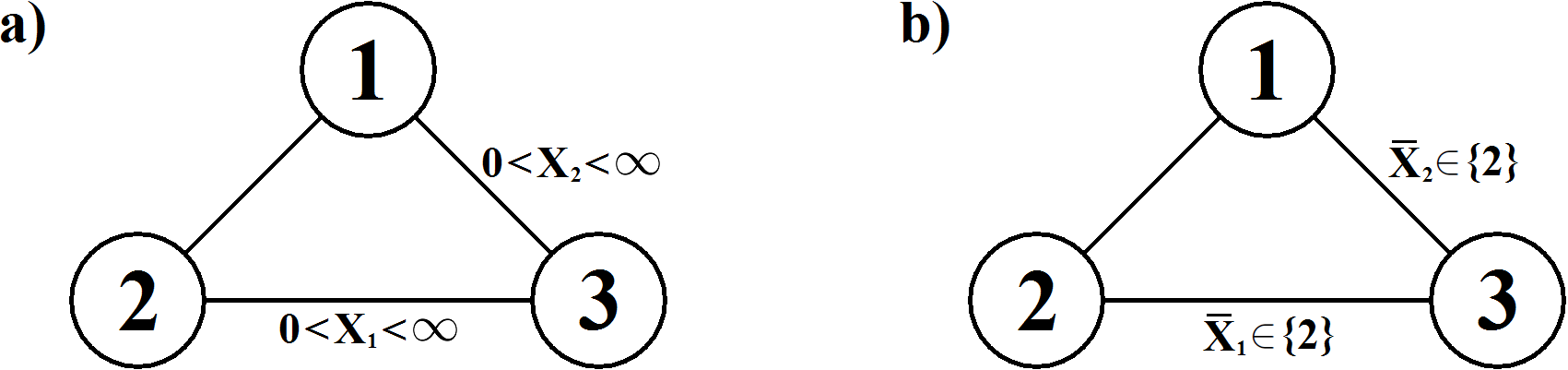}
\end{center}
\caption{In (a) a stratified graph with two strata and in (b) its discretized counterpart.}
\label{combine}
\end{figure}
This SG induces the two separate context-specific independencies: $X_{2} \perp X_{3} \mid X_{1}>0$ and $X_{1} \perp X_{3} \mid X_{2}>0$. Intuitively, this would be interpreted to correspond to the case where the edge $\{1,3\}$ is removed in the context $X_{2}>0$, and the edge $\{2,3\}$ in the context $X_{1}>0$, consequently excluding both edges when the conditions $X_{1}>0$ and $X_{2}>0$ are simultaneously fulfilled. However, a closer scrutiny reveals that this is not a correct interpretation of the induced dependence structure. To show this, let $f(X_{3}  \mid \cdot)$ denote the conditional probability density function of $X_{3}$ given $X_{1}$ and/or $X_{2}$. Assume that $a_{1}$, $a_{2}$ , and $b_{1}$ are all constants $>0$. The following equalities will then hold for a density function following the dependence structure indicated by the SG in Figure \ref{combine}a.
\begin{equation}
\begin{split}
f(X_{3} \mid X_{1} &  =a_{1})=f(X_{3} \mid X_{1}=a_{1},X_{2}=b_{1})=f(X_{3} \mid X_{2}=b_{1})=\\
f(X_{3} \mid X_{1} &  =a_{2},X_{2}=b_{1})=f(X_{3} \mid X_{1}=a_{2}).
\end{split}
\label{larequ}
\end{equation}
The equality $f(X_{3} \mid X_{1}=a_{1})=f(X_{3} \mid X_{1}=a_{2})$, whenever $a_{1}$ and $a_{2}>0$ will be denoted as $X_{3} \perp X_{1} \mid X_{1}>0$ or as $f(X_{3} \mid X_{1}=a_{1})=f(X_{3}|X_{1}>0)$. Analogous calculations will result in the independence restriction $X_{3} \perp X_{2} \mid X_{2}>0$. Recalling the assumption that $a_{1}$ and $b_{1}$ are positive constants, the following observation can be made 
\begin{equation}
f(X_{3} \mid X_{1}=a_{1}, X_{2}=b_{1})=f(X_{3} \mid X_{1}=a_{1}) = f(X_{3} \mid X_{2}=b_{1}),
\label{onetwo}
\end{equation}
i.e. $f(X_{3} \mid X_{1}>0)=f(X_{3} \mid X_{2}>0)$. Now let $a>0$ and $b\leq0$, using the results from \eqref{larequ} and \eqref{onetwo} we obtain the following equalities
\[
f(X_{3} \mid X_{1}=a,X_{2}=b)=f(X_{3} \mid X_{1}=a)=f(X_{3} \mid X_{1}>0)=f(X_{3} \mid X_{2}>0).
\]
A similar result is achieved when $b>0$ and $a\leq0$. Consequently, given that at least one of the conditions $X_{1}>0$ or $X_{2}>0$ is fulfilled, $X_{3}$ is independent of both $X_{1}$ and $X_{2}$. This leads to a model representation with two separate dependence structures, one for the context $X_{1} \leq 0$ and $X_{2} \leq 0$ depicted in Figure \ref{context}a, and another for the context $X_{1}>0$ or $X_{2}>0$ depicted in Figure \ref{context}b.
\begin{figure}[h]
\begin{center}
\includegraphics{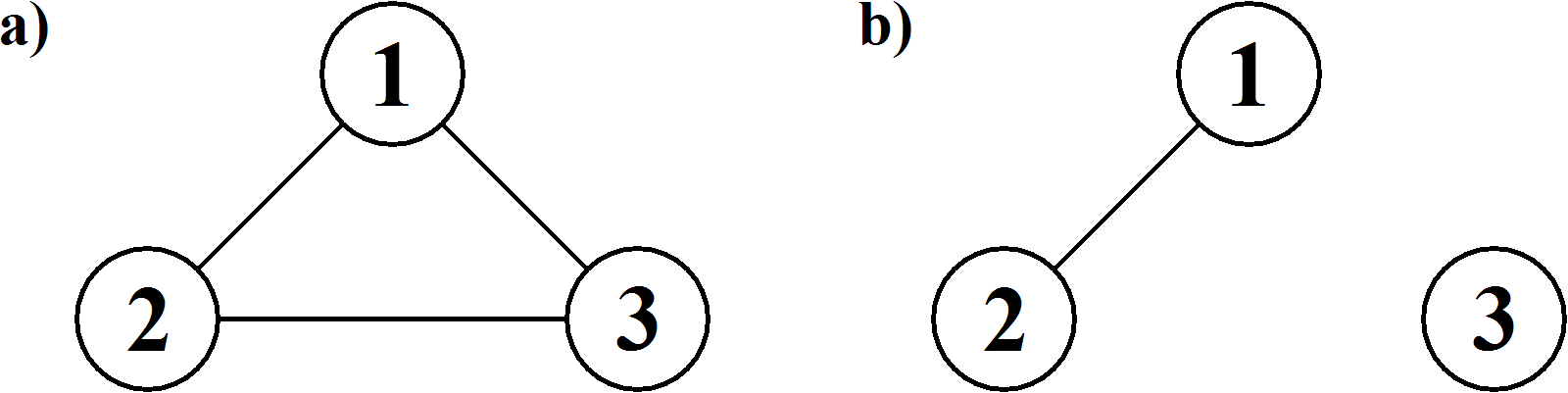}
\end{center}
\caption{Different dependence structures for (a) the context $X_{1} \leq 0$ and $X_{2} \leq 0$ (b) the context $X_{1} > 0$ or $X_{2} > 0$.}
\label{context}
\end{figure}
This simple example illustrates the principle by which the complexity of the dependence structure increases as the number of variables and strata grow. In order to create a coherent modeling framework we therefore introduce the concept of a decomposable stratified graph. This will enable a discretization of the graph, translating a continuous SG into a discrete SG for which the induced dependence structure is thoroughly investigated in \citet{Nyman13}.
\begin{definition} [\textit{Decomposable SG}]
Let $G$ be a decomposable graph and $G_{L} = (G, L)$ an SG with $G$ as its underlying graph. In $G_{L}$, let $E_{L}$ denote the set of all stratified edges (edges associated with a stratum), $E_{C}$ the set of all edges in clique $C \in \mathcal{C}(G)$, and $E_{\mathcal{S}}$ the set of all edges in the separators $\mathcal{S}(G)$ of $G$. The SG is defined as decomposable if
\[
E_{L}\cap E_{\mathcal{S}}=\varnothing,
\]
and
\[
E_{L}\cap E_{C} = \varnothing \hspace{0.4cm} \text{or} \hspace{0cm} \bigcap_{\{\delta,\gamma\}\in E_{L}\cap E_{C}}\hspace{-0.5cm}\{\delta,\gamma\}\hspace{0.1cm}\neq\hspace{0.1cm}\varnothing\hspace{0.1cm}\text{ for all }\hspace{0.1cm}C\in\mathcal{C}(G).
\]
\end{definition}

An SG is defined as decomposable if the underlying graph is decomposable, no strata are associated with edges in any separator, and in every clique all stratified edges have at least one node in common. An SGGM where $(G, L)$ constitutes a decomposable SG is termed a decomposable SGGM. Restricting the underlying graph to be decomposable in combination with not allowing strata to be associated with edges in separators permits a factorization of the density function according to \eqref{clisep}. This is due to the fact that for a decomposable SG the nodes in a stratified edge $\{\delta ,\gamma\}$ and the nodes in $L_{\{\delta,\gamma\}}$ all belong to the same clique. Hence, the strata on an edge in one clique cannot imply changes to the dependence structure between variables associated to nodes in any other clique.

As the separators contain no stratified edges, the dependence structure between variables associated to nodes in a separator is trivial, the same holds for cliques containing no stratified edges. Therefore, further analysis of the dependence structure of decomposable SGGMs can be restricted to cliques of the underlying graph containing one or more stratified edges. This analysis is simplified by first transforming the continuous SG to a discrete SG, for which such an analysis is readily performed, at which point we can revert back to the continuous setting.

We start by defining a discretization process. Let $C$ be the clique under consideration and let $X_{\zeta}$ be a variable such that $\zeta \in C$. Set $\Omega_{\zeta}$ to be the set containing $-\infty$ and $\infty$, along with all the endpoints of the intervals associated to $X_{\zeta}$ when defining the strata on the edges in $C$, i.e.
\[
\Omega_{\zeta} = \{-\infty, \infty\} \ \cup \mathop{\bigcup_{\delta, \gamma \in C}}_{\zeta \in L_{\{\delta, \gamma\}}} \{ \omega : \omega = a_{\zeta} \text{ or } \omega = b_{\zeta} \text{ in a condition } a_{\zeta} < X_{\zeta} < b_{\zeta} \text{ in }  \mathcal{L}_{\{\delta, \gamma\}} \}.
\]
In addition, if an element $\omega$ features as a lower limit in one condition and as an upper limit in another condition it appears twice in $\Omega_{\zeta}$. Next the elements in $\Omega_{\zeta} = (\omega_1, \ldots, \omega_{\text{t}})$ are sorted such that $\omega_i \leq \omega_{i+1}$. The elements in $\Omega_{\zeta}$ are then used as endpoints when defining a set of intervals, $\{(\omega_1, \omega_2), (\omega_2, \omega_3), \ldots, (\omega_{t-1}, \omega_t)\}$. The interval endpoints can either be included or excluded, the first interval, $(\omega_1, \omega_2)$, is always left-open. An interval, $(\omega_{i}, \omega_{i+1})$, is right-open if $\omega_{i} \neq \omega_{i+1}$ and $\omega_{i+1}$ is the upper limit in some stratum condition associated to $\zeta$, i.e $\omega_{i+1} = b_{\zeta}$ in some condition $a_{\zeta} < X_{\zeta} < b_{\zeta}$ in a stratum, otherwise the interval is right-closed. If the interval $(\omega_{i-1}, \omega_{i})$ is right-open the interval $(\omega_{i}, \omega_{i+1})$ is left-closed and correspondingly, if $(\omega_{i-1}, \omega_{i})$ is right-closed the interval $(\omega_{i}, \omega_{i+1})$ is left-open. The last interval, $(\omega_{t-i}, \omega_t)$, is always right-open. Following this method none of the intervals will overlap and the union of all intervals will equal $(-\infty, \infty)$. If a variable $X_{\eta}$ does not appear in any condition in a stratum, or if all conditions are of the form $-\infty < X_{\eta} < \infty$, $\Omega_{\eta}$ will equal $\{-\infty, \infty\}$ resulting in the single interval $(-\infty, \infty)$. Each interval is then assigned an integer value, such that $(\omega_{i}, \omega_{i+1}) \rightarrow i$. This will allow for the translation of a condition in a stratum into a discrete form. If we by $\bar{X}_\zeta$ denote the discrete counterpart of $X_{\zeta}$ the condition $a_{\zeta} < X_{\zeta} < b_{\zeta}$, which is equivalent to $\omega_i < X_{\zeta} < \omega_{j+1}$ for some values $i$ and $j$, can be written as $\bar{X}_{\zeta} \in \Lambda_{\zeta} = \{i, i+1, \ldots, j\}$. Once all the conditions used to define a set according to \eqref{stratumSet} have been converted to there discrete counterparts, the discrete version of the set can be written as 
\[
\bar{X}_{L_{\{\delta, \gamma\}}} \in \mathop{\times}_{\zeta \in L_{\{\delta, \gamma\}}} \Lambda_{\zeta}.
\]
Here $\bar{X}_{L_{\{\delta, \gamma\}}}$ denotes the discretized versions of the variables in $X_{L_{\{\delta, \gamma\}}}$. This means that a single set will be converted into $\prod_{{\zeta} \in L_{\{\delta, \gamma\}}} |\Lambda_{\zeta}|$ discrete outcomes. Transforming all the sets composing a stratum to their discretized versions discretizes the stratum. Once all the strata in an continuous SG have been discretized the result is a discrete SG.

Next we make use of the trait that all stratified edges in a clique of a decomposable SG, and its discrete counterpart, have at least one node in common, this allows us to introduce an ordering of the $d$ variables corresponding to clique $C$ such that the last variable $\bar{X}_{d}$ in the ordering corresponds to the node found in all stratified edges. We will define the variables $(\bar{X}_{1},\ldots, \bar{X}_{d-1})$, which are considered pairwise dependent in the entire outcome space, since the edges connecting the corresponding nodes are not stratified, as the parents of $\bar{X}_{d}$ and denote them by $\bar{\Pi}_{d}$. All the changes induced to the dependence structure by the introduction of strata can be seen in the conditional dependence of $\bar{X}_{d}$ given the set of variables $\bar{\Pi}_{d}$.

Context-specific independencies are readily illustrated using conditional probability tables, which assign a specific distribution to $\bar{X}_{d}$ for each outcome of $\bar{\Pi}_{d}$. Instead of $\bar{X}_{d}$ being assigned a unique distribution for each outcome of $\bar{\Pi}_{d}$, a partition of the outcome space of $\bar{\Pi}_{d}$ is devised, such that any two outcomes in the same block induce the same distribution of $\bar{X}_{d}$. Given our discretized SG we can utilize conditional probability tables, as a discrete stratum conveniently merges parent outcomes creating a partition of the outcome space of $\bar{\Pi}_{d}$. 

Consider the edge $\{(d - 1), d\}$ with the associated discrete stratum $(\bar{X}_1 = \lambda_1, \ldots, \bar{X}_{d-2} = \lambda_{d-2})$. This induces the context-specific independence $\bar{X}_{d-1} \perp \bar{X}_{d} \mid \{\bar{X}_1 = \lambda_1, \ldots, \bar{X}_{d-2} = \lambda_{d-2}\}$, which in terms of a conditional probability table corresponds to merging all outcomes of $\bar{\Pi}_{d}$ where $\bar{X}_1 = \lambda_1, \ldots, \bar{X}_{d-2} = \lambda_{d-2}$. Completing this procedure for all discrete strata will result in the desired partition of the outcome space of $\bar{\Pi}_d$. Each block of the partition is associated with a specific dependence structure which can be ascertained from the conditional probability table. Given a block of outcomes an edge in the SG is deleted if any outcome in the block satisfies any condition in the discrete stratum associated to the edge, any stratum which is not satisfied is also deleted, resulting in each block being associated with a dependence structure determined by an ordinary graph.

Once this entire procedure is completed for all of the cliques found in the underlying graph the results can be combined and translated back to the continuous setting. This yields a set of conditions on the variables $X_{\Delta}$ that form a partition of the outcome space, each condition associated with a specific dependence structure in the form of an ordinary graph. The above described method offers a consistent approach to resolving the dependence structure induced by any decomposable SG, and will later be used when performing inference for SGGMs. We end this section with an example. Consider again the SG in Figure \ref{combine}a, which corresponds to the sets $\Omega_1=\{-\infty, 0, \infty\}$, $\Omega_2=\{-\infty, 0, \infty\}$, and $\Omega_3=\{-\infty, \infty\}$. The resulting intervals along with their discretized values are listed in Table \ref{discrete}.
\begin{table}[htb]
\begin{center}
\begin{tabular}{ccc}
\hline
$X_1$ & $X_2$ & $X_3$ \\
\hline
$(-\infty, 0] \rightarrow 1$ & $(-\infty, 0] \rightarrow 1$ & $(-\infty, \infty) \rightarrow 1$ \\
$(0, \infty) \rightarrow 2$ & $(0, \infty) \rightarrow 2$ & \\
\hline
\end{tabular}
\caption{Discretization of variables in the SG in Figure \ref{combine}a.}
\label{discrete}
\end{center}
\end{table}
\vspace{-0.1cm}
Using these discrete values we can form the discrete SG in Figure \ref{combine}b, from which the conditional probability table for $\bar{X}_3$, found in Table \ref{CPT}, can be derived. The stratum $\bar{X}_1 \in \{2\}$ on the edge $\{2, 3\}$ will merge outcomes $(3)$ and $(4)$, while the stratum $\bar{X}_2 \in \{2\}$ on the edge $\{1, 3\}$ will merge outcomes $(2)$ and $(4)$.
\begin{table}[htb]
\begin{center}
\begin{tabular}{cccc}
\hline
Outcome & $\bar{X}_1$ & $\bar{X}_2$ & Partition \\
\hline
(1) & 1 & 1 & $p_1$ \\
(2) & 1 & 2 & $p_2$ \\
(3) & 2 & 1 & $p_2$ \\
(4) & 2 & 2 & $p_2$ \\
\hline
\end{tabular}
\caption{Conditional probability table resulting from the SGM in Figure \ref{combine}b.}
\label{CPT}
\end{center}
\end{table}
If $\bar{X}_1 \in \{2\}$ or $\bar{X}_2 \in \{2\}$, corresponding to $X_1 \in (0, \infty)$ or $X_2 \in (0, \infty)$, both the edges $\{1, 2\}$ and $\{2, 3\}$ are deleted in that context resulting in the conditions and dependence structures found in Figure \ref{context}.

\subsection{Identifiability of SGGMs}
Identifiability of models with context-specific independence restrictions is of concern since one wishes to avoid situations where two distinct sets of restrictions lead to the same parametric model to retain interpretability and tractability of inference. Also the class of SGGMs necessitates a careful analysis of model identifiability. We illustrate that two distinct decomposable SGs may induce exactly the same dependence structure, as exemplified by the SGs in Figure \ref{same4}a and Figure \ref{same4}c.
\begin{figure}[h]
\begin{center}
\includegraphics[width=0.9\textwidth]{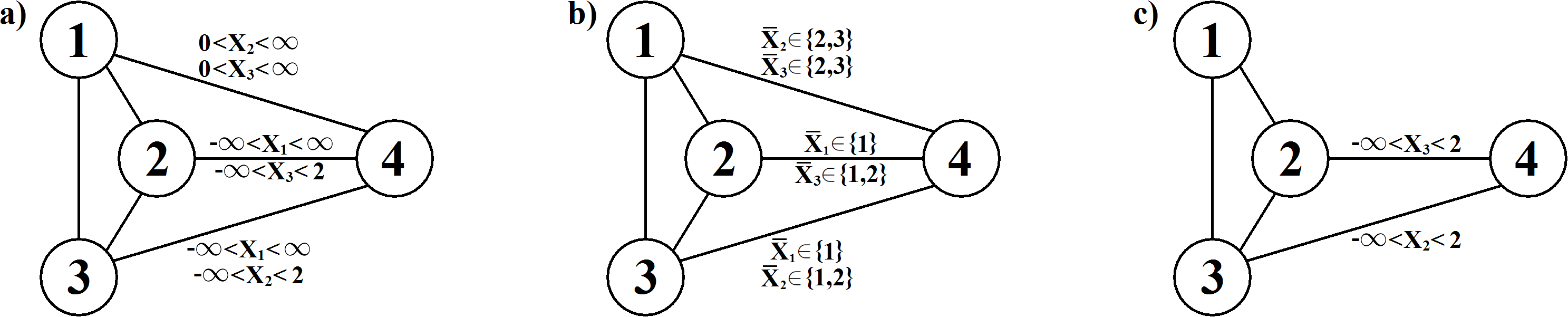}
\end{center}
\caption{Two distinct SGs, (a) and (c), that look completely different but induce the same dependence structure. The graph in (b) is the discretized version of the graph in (a).}
\label{same4}
\end{figure}
It is far from trivial to deduce that these two graphs induce the same dependence structure. To prove that this indeed is the case we apply the method from the previous section to the SG in Figure \ref{same4}a. We start by defining the sets $\Omega_1 = \{-\infty, \infty\}$, $\Omega_2 = \{-\infty, 0, 2, \infty\}$, $\Omega_3 = \Omega_2$, and $\Omega_4 = \Omega_1$, which leads to the discretization in Table \ref{disc4}.
\begin{table}[h]
\begin{center}
\begin{tabular}{cccc}
\hline
$X_1$ & $X_2$ & $X_3$ & $X_4$ \\
\hline
$(-\infty, \infty) \rightarrow 1$ & $(-\infty, 0] \rightarrow 1$ & $(-\infty, 0] \rightarrow 1$ & $(-\infty, \infty) \rightarrow 1$ \\
 & $(0, 2) \rightarrow 2$ & $(0, 2) \rightarrow 2$ &  \\
 & $[2, \infty) \rightarrow 3$ & $[2, \infty) \rightarrow 3$ & \\
\hline
\end{tabular}
\caption{Discretization of variables in the SG in Figure \ref{same4}a.}
\label{disc4}
\end{center}
\end{table}
Using this discretization we can form the discrete SG in Figure \ref{same4}b and the corresponding conditional probability table for $\bar{X}_4$ in Table \ref{CPT4}. The stratum on the edge $\{3, 4\}$ merges outcomes $(1)$, $(2)$, and $(3)$ and $(4)$, $(5)$, and $(6)$, while the stratum on the edge $\{2, 4\}$ merges the outcomes $(1)$, $(4)$, and $(7)$ and $(2)$, $(5)$, and $(8)$, resulting in a partition with only two blocks. Both blocks contain outcomes that satisfies the stratum on the edge $\{1, 4\}$, meaning that it will in all cases be deleted and can therefore be deleted from the underlying graph as well. Deleting the edge will result in $X_1$ being removed from $L_{\{2, 4\}}$ and $L_{\{3, 4\}}$. This, however, is irrelevant as the condition placed on $X_1$ in the strata $\mathcal{L}_{\{2, 4\}}$ and $\mathcal{L}_{\{3, 4\}}$ is of the form $-\infty < X_1 < \infty$ and can therefore be removed from the strata. As we can now see the SG in Figure \ref{same4}a can be transformed into the SG in Figure \ref{same4}c without altering the dependence structure.
\begin{table}[h]
\begin{center}
\begin{tabular}{ccccc}
\hline
Outcome & $\bar{X}_1$ & $\bar{X}_2$ & $\bar{X}_3$ & Partition \\
\hline
(1) & 1 & 1 & 1 & $p_1$ \\
(2) & 1 & 1 & 2 & $p_1$ \\
(3) & 1 & 1 & 3 & $p_1$ \\
(4) & 1 & 2 & 1 & $p_1$ \\
(5) & 1 & 2 & 2 & $p_1$ \\
(6) & 1 & 2 & 3 & $p_1$ \\
(7) & 1 & 3 & 1 & $p_1$ \\
(8) & 1 & 3 & 2 & $p_1$ \\
(9) & 1 & 3 & 3 & $p_2$ \\
\hline
\end{tabular}
\caption{Conditional probability table for $\bar{X}_4$ resulting from the discrete SG in Figure \ref{same4}b.}
\label{CPT4}
\end{center}
\end{table}
In order to construct a class of SGs in which no two graphs induce the same dependence structure, the concept of maximal regular SG is introduced.
\begin{definition}[\textit{Maximal regular SG}]
A decomposable SG is defined as maximal regular if for no edge $\{\delta, \gamma\}$ in $G$ does the set $\mathcal{L}_{\{\delta, \gamma\}}$ encompass the entire outcome space of the variables $X_{L_{\{\delta, \gamma\}}}$, nor can the set $\mathcal{L}_{\{\delta, \gamma\}}$ be expanded without altering the dependence structure.
\end{definition}
An SGGM where $(G, L)$ constitutes a maximal regular SG is termed a maximal regular SGGM.

\begin{theorem}
Two maximal regular SGs induce the same dependence structure if and only if they are identical.
\label{theoremMR}
\end{theorem}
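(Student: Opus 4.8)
\emph{Proof strategy.} The ``if'' direction is immediate: identical stratified graphs carry by definition the same restrictions and hence induce the same dependence structure. For ``only if'' I would start from two maximal regular SGs $(G,L)$ and $(G',L')$ with a common dependence structure $\mathcal{D}$ and show that $\mathcal{D}$ determines the underlying graph and every stratum, which forces $(G,L)=(G',L')$.

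The first step is to localize to cliques. Because $G$ is decomposable, no stratum sits on a separator edge, and every stratified edge together with its common neighbours lies inside one clique, the information carried by $\mathcal{D}$ splits over $\mathcal{C}(G)$: separators and strata-free cliques contribute only ordinary (fully conditional) dependence, while for each clique $C$ the restriction of $\mathcal{D}$ to the variables of $C$ encodes exactly the subgraph $G_C$ together with the strata on the edges of $C$. I would then fix such a clique $C$, order its nodes so that the node $d$ common to all its stratified edges is last, and apply the discretization of Section~\ref{secSGGM}. This reduces the claim to the corresponding statement for discrete stratified graphs, since the discretization is reversible, sends clique-local maximal regular SGs to maximal regular discrete SGs, and respects the relation ``induces the same dependence structure''.

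Inside the discrete clique the reconstruction runs in two stages. First the graph: an edge $\{\delta,d\}$ is \emph{absent} precisely when the corresponding full conditional independence holds over the entire discretized outcome space; for a stratified edge that is present this fails, because the first maximal-regularity condition makes $\mathcal{L}_{\{\delta,d\}}$ a proper subset of the outcome space of $X_{L_{\{\delta,d\}}}$, the discretization endpoints are chosen so that every block lies wholly inside or wholly outside each discrete stratum, and (with the help of the conditional-probability-table analysis below) the edge then survives in some block; an unstratified edge is present in every block. Hence $\mathcal{D}$ pins down $G_C$, and equally $G'_C$, so $G=G'$. Second the strata: with $G_C$ fixed and $e=\{\delta,d\}$ a stratified edge, let $A_e$ be the set of parent configurations at which $\mathcal{D}$ deletes $e$. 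Satisfying the stratum deletes $e$, so $\mathcal{L}_e\subseteq A_e$; conversely $A_e$ already carries the independence $\bar X_\delta\perp \bar X_d$ at those configurations, so enlarging $\mathcal{L}_e$ to $A_e$ still meets every restriction imposed by $\mathcal{D}$, and -- this being the crux -- it actually leaves $\mathcal{D}$ unchanged, so the second maximal-regularity condition forces $\mathcal{L}_e=A_e$; the same argument applied to $(G',L')$ gives $\mathcal{L}'_e=A_e$. Unstratified edges carry the empty stratum in both SGs. Collecting over all edges and all cliques yields $L=L'$, and translating back to the continuous setting completes the proof.

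The step I expect to be the main obstacle is the ``crux'' just above -- and, relatedly, the claim that a present stratified edge survives in some block. Strata act on the conditional probability table of $\bar X_d$ only through merging of parent configurations, so enlarging a stratum could a priori trigger further merges and therefore delete other edges; one has to verify that growing $\mathcal{L}_e$ up to the deletion set $A_e$ introduces no new independence anywhere in $C$. This is exactly the bookkeeping over conditional probability tables performed for discrete stratified graphical models in \citet{Nyman13}, on which I would lean. A secondary, more routine, task is to confirm that the discretization of Section~\ref{secSGGM} is a faithful, reversible dictionary between continuous and discrete maximal regular SGs on a clique, so that both the graph- and the stratum-recovery transfer without loss in either direction.
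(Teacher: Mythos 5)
Your argument is correct in its logical core, and that core is the same as the paper's: the underlying graph is pinned down because an edge present in one SG but absent in the other would force the first SG's stratum on that edge to cover the entire outcome space of $X_{L_{\{\delta,\gamma\}}}$, violating the first clause of maximal regularity; the strata are pinned down because any context in which the shared dependence structure deletes an edge could be absorbed into the stratum, violating the second clause. Where you diverge is in packaging: the paper runs this as a two-step contradiction directly on the continuous SGs, with no clique localization and no discretization, whereas you route everything through the Section~\ref{secSGGM} machinery and a reduction to the discrete case of \citet{Nyman13}. The step you single out as the crux --- that enlarging $\mathcal{L}_e$ to the deletion set $A_e$ might cascade through further merges in the conditional probability table and create new independencies --- is dissolved in the paper by arguing one level up. If $\omega\in\mathcal{L}^2_{\{\delta,\gamma\}}\setminus\mathcal{L}^1_{\{\delta,\gamma\}}$ and the two SGs induce the same dependence structure, then the independence $X_\delta\perp X_\gamma\mid X_{L_{\{\delta,\gamma\}}}\in\omega$ is already part of the structure induced by $G^1_L$; adding $\omega$ to $\mathcal{L}^1_{\{\delta,\gamma\}}$ therefore only makes explicit a restriction the model already satisfies, so the induced dependence structure cannot change, and the second maximal-regularity clause applies at once. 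Your cascading-merge worry is really about whether the \emph{derivation procedure} commutes with this expansion, which becomes moot once ``dependence structure'' is read semantically (the set of independencies induced) rather than operationally (the output of the partition algorithm); under the operational reading your concern is legitimate and your CPT bookkeeping would be the way to close it, but it costs you the faithfulness-of-discretization lemma, which you correctly note you have not proved. In short: same theorem-shaped argument, but the paper's direct route avoids both of the obligations your version incurs.
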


\begin{proof}[Proof of Theorem~\ref{theoremMR}]
Assume that $G_L^1 = (G^1,L^1)$ and $G_L^2=(G^2,L^2)$ are two distinct maximal regular SGs that induce the same dependence structure. Further, assume that the underlying graphs $G^1$ and $G^2$ differ in at least one edge $\{\delta, \gamma\}$ which is present in $G^1$ but not in $G^2$. For $G_L^2$ it then holds that $X_{\delta} \perp X_{\gamma} \mid X_{\Delta} \backslash (X_{\delta}, X_{\gamma})$. For the same to be true for $G_L^1$ the stratum $\mathcal{L}^1_{\{\delta, \gamma\}}$ has to consist of the entire outcome space of the variables $X_{L_{\{\delta, \gamma\}}}$. This contradicts the assumption that $G_L^1$ is maximal regular. Therefore, it can be concluded that $G^1 = G^2$. In order to prove that $L^1$ and $L^2$ are identical we start by assuming that there exists a set of outcomes $\omega \in \mathcal{L}^2_{\{\delta, \gamma\}}$ such that $\omega \notin \mathcal{L}^1_{\{\delta, \gamma\}}$. This means that the dependence structure for $G_L^1$ and $G_L^2$ encompasses the context-specific independence $X_{\delta} \perp X_{\gamma} \mid X_{L_{\{\delta, \gamma\}}} \in \omega$, and that $\omega$ can be added to $\mathcal{L}^1_{\{\delta, \gamma\}}$ without changing the dependence structure. Again, this contradicts the assumption that $G_L^1$ is maximal regular and proves that $L^1 = L^2$.
\end{proof}

When performing inference restricting the model space to maximal regular SGs will decrease the size of the model space as well as at the same time avoiding the problem of different models being assigned the same likelihood due to fact that they induce identical dependence structures. In the next section we will prove that the family of distributions induced by a decomposable SG is a part of the curved exponential family.

\subsection{SGGMs and curved exponential families}
Distributions in GGMs belong to the exponential family, as shown for instance in \cite{Lauritzen96}. A particularly useful characteristic of the exponential family is the consistency of the model selection criterion introduced by \cite{Schwarz78}, often referred to as the Bayesian information criterion (BIC). \cite{Haughton88} extended the consistency result to the curved exponential family, which we will utilize for model selection among SGGMs. For an introduction to the statistical theory for the exponential and curved exponential family, see, for instance, \cite{DasGupta11}.

The probability density function of a distribution in a decomposable SGGM is a function depending on the parameters corresponding to those of a multivariate normal distribution, i.e. the covariance $\Sigma$ and mean $\mu$. However, as is in general assumed for GGMs, we restrict the mean to zero and ignore it in the remainder of the article. In section \ref{secSGGM} it was established that an SG induces a set of conditions on the variables $X_{\Delta}$, that result in a partition of the outcome space where each block is associated with a distinct dependence structure represented by an ordinary graph. These conditions, which are functions of $x$, are denoted as $c^{(1)},\ldots, c^{(\rho)}$. Using iterative proportional fitting, see, for instance \cite{Whittaker90}, the covariance matrix $\Sigma$ can be manipulated to reflect the dependence structure associated with any of the conditions. As such, each condition $c^{(r)}$ gives rise to a specific covariance matrix $\Sigma^{(r)}$ and corresponding precision matrix $K^{(r)}$ which are completely determined by the covariance matrix $\Sigma$ and the dependence structure associated with $c^{(r)}$. Using this notation, the density function can be written as
\begin{equation}
g_{\Sigma}(x)=\frac{1}{Z}\sum_{r=1}^{\rho}f_{\Sigma^{(r)}}(x)I_{c^{(r)}}(x)=\frac{1}{Z}\sum_{r=1}^{\rho}(2\pi)^{-d/2}|K^{(r)}|^{1/2}e^{-\frac{1}{2}x^T K^{(r)} x}I_{c^{(r)}}(x),\label{SGGMpdf}
\end{equation}
where $f_{\Sigma^{(r)}}(x)$ is the density function of the multivariate normal distribution with covariance matrix $\Sigma^{(r)}$ and $Z$ is a normalizing constant. The terms $I_{c^{(r)}}(x)$ are indicator functions equaling $1$, if $x$ satisfies the condition $c^{(r)}$ and $0$ otherwise. As each term in the sum is constituted by a density function of a multivariate normal distribution multiplied by an indicator function, each term will be strictly positive in the part of the outcome space where the corresponding condition is fulfilled. Since the conditions induce a partition of the entire outcome space, guaranteeing that exactly one of the conditions will be fulfilled for every $x$, it follows that $g_{\Sigma}(x)$ is strictly positive for every $x$. The inclusion of the normalizing constant $Z$ ensures that the integral of $g_{\Sigma}(x)$ over the entire outcome space is equal to one. Determining the value of $Z$ is computationally straightforward as each block of the partition of the outcome space corresponding to a condition $c^{(r)}$ is of the form
\[
\{x:\bigcap_{\zeta}a_{\zeta}^{(r)}<x_{\zeta}<b_{\zeta}^{(r)} \text{, for all $\zeta \in \{1, \ldots, d\}$ for some constants $a_{\zeta}^{(r)}$ and $b_{\zeta}^{(r)}$}\},
\]
which implies that $Z$ can be calculated as
\[
Z=\sum_{r=1}^{\rho}\int_{a_{1}^{(r)}}^{b_{1}^{(r)}}\ldots\int_{a_{d}^{(r)}}^{b_{d}^{(r)}}f_{\Sigma^{(r)}}(x)dx_{d}\ldots dx_{1}.
\]
The following criterion is used to determine whether or not a continuous distribution belongs to the exponential family/curved exponential family.
\begin{definition}[\textit{Exponential family}]
A continuous distribution belongs to the exponential family if the probability density function can be written in the form
\[
f_{\theta}(x)=e^{\sum_{i=1}^{k}\eta_{i}(\theta)T_{i}(x)-\psi(\theta)} h(x),
\]
where $k$, which is the length of vectors $\eta$ and $T$, equals the dimension of the parameter $\theta$. In the case when $k$ exceeds the dimension of $\theta$ the distribution belongs to the curved exponential family.
\end{definition}
\begin{theorem}
The distribution in a decomposable SGGM belongs to the curved exponential family.
\label{theoremEF}
\end{theorem}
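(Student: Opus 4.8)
The plan is to take the explicit density $g_{\Sigma}$ of \eqref{SGGMpdf} and rewrite it directly in the functional form required by the definition of a (curved) exponential family. First I would fix the parameter: let $\theta$ be the collection of entries of $\Sigma$ that are not forced to vanish by the underlying decomposable graph $G$, so that $\dim\theta=|E|+d$. Since $G$ is decomposable, each $\Sigma^{(r)}$ arises from $\Sigma$ by iterative proportional fitting along a decomposable dependence graph, and therefore $K^{(r)}$ (and hence $\log|K^{(r)}|$) is an explicit smooth function of $\theta$, with $|K^{(r)}|>0$; moreover the normalizing constant $Z=Z(\theta)$ is finite, strictly positive and smooth. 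I would establish these regularity facts first, since they are what ultimately makes the identification of $\eta$, $\psi$ and $h$ legitimate and, via \citet{Haughton88}, what makes BIC applicable.

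The central step is to collapse the finite sum in \eqref{SGGMpdf} into a single exponential. By the construction in Section~\ref{secSGGM} the conditions $c^{(1)},\dots,c^{(\rho)}$ partition the outcome space, so the indicators $I_{c^{(r)}}$ are pairwise disjoint with $\sum_{r}I_{c^{(r)}}(x)=1$; consequently, for arbitrary reals $a_{1},\dots,a_{\rho}$ one has the elementary identity $\sum_{r}e^{a_{r}}I_{c^{(r)}}(x)=\exp\!\bigl(\sum_{r}a_{r}I_{c^{(r)}}(x)\bigr)$. Applying it with $a_{r}=\tfrac12\log|K^{(r)}|-\tfrac12 x^{T}K^{(r)}x$ turns \eqref{SGGMpdf} into
\[
g_{\Sigma}(x)=\frac{(2\pi)^{-d/2}}{Z(\theta)}\exp\!\Bigl(\sum_{r=1}^{\rho}\bigl(\tfrac12\log|K^{(r)}|-\tfrac12\,x^{T}K^{(r)}x\bigr)I_{c^{(r)}}(x)\Bigr).
\]
Next I would expand $-\tfrac12\,x^{T}K^{(r)}x=-\tfrac12\sum_{i}k^{(r)}_{ii}x_{i}^{2}-\sum_{i<j}k^{(r)}_{ij}x_{i}x_{j}$, where $k^{(r)}_{ij}=0$ whenever $i<j$ and $\{i,j\}$ is not an edge of the dependence graph attached to $c^{(r)}$, so those terms drop out. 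The exponent then becomes a finite sum of products (function of $\theta$)$\,\times\,$(function of $x$): the $x$-factors are the indicator statistics $I_{c^{(r)}}(x)$ together with the quadratic statistics $x_{i}x_{j}I_{c^{(r)}}(x)$, while the matching $\theta$-factors are $\tfrac12\log|K^{(r)}|$, $-\tfrac12 k^{(r)}_{ii}$ and $-k^{(r)}_{ij}$. Setting $h(x)\equiv1$, $\psi(\theta)=\log Z(\theta)+\tfrac d2\log(2\pi)$, and letting $(\eta_{\ell},T_{\ell})_{\ell=1}^{k}$ enumerate these pairs, $g_{\Sigma}$ acquires exactly the form $e^{\sum_{\ell}\eta_{\ell}(\theta)T_{\ell}(x)-\psi(\theta)}h(x)$.

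It remains to compare $k$ with $\dim\theta$. Here $k\ge\rho+\sum_{r}(|E^{(r)}|+d)$, where $E^{(r)}$ is the edge set of the dependence graph of $c^{(r)}$, and as soon as the SG carries a genuine stratum we have $\rho\ge2$, so $k>\dim\theta=|E|+d$; this strict inequality persists even after passing to a minimal representation (merging coincident $(\eta_{\ell},T_{\ell})$), precisely because ``stratified'' means that not all blocks of the partition share the same dependence graph, and because $\theta\mapsto\eta(\theta)$ is genuinely nonlinear. Hence the distribution lies in the curved, not merely the full, exponential family. I expect the only real obstacle to be the passage from \eqref{SGGMpdf} to a single exponential: \eqref{SGGMpdf} looks like a Gaussian mixture with parameter-dependent mixing weights, which is not visibly an exponential family at all, and the identity that rescues it works only because decomposability of the SG was already shown to force the conditions $c^{(r)}$ to be a genuine partition of $\mathbb{R}^{d}$ into coordinate boxes---the same feature that keeps $Z(\theta)$ a tractable smooth function.
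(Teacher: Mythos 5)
Your proposal is correct and follows essentially the same route as the paper's Appendix~A: the paper also collapses the sum in \eqref{SGGMpdf} into a single exponential by exploiting that the conditions $c^{(1)},\ldots,c^{(\rho)}$ partition the outcome space, writing the sufficient statistics as gated functions $f_i^{(r)}(x)=x_iI_{c^{(r)}}(x)$ together with the indicators $I_{c^{(r)}}(x)$, and matching them with the entries of the $K^{(r)}$ and the terms $\log(|K^{(r)}|^{1/2})$, with $\psi(\Sigma)=\log Z$. Your explicit count showing $k>\dim\theta$ once $\rho\geq 2$ is a small addition the paper leaves implicit, but it does not change the argument.
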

\begin{proof}[Proof of Theorem~\ref{theoremEF}]
See Appendix A.
\end{proof}

\section{Inference for SGGMs}
\subsection{Score function for SGGMs}
To perform inference and model selection in the class of SGGMs, we adopt an approximate Bayesian approach based on the model scoring criterion introduced by \cite{Schwarz78} combined with a stochastic search for optimal models. We will apply a non-reversible Markov chain Monte Carlo algorithm, introduced by \cite{Corander06} and further developed in \cite{Corander08}, to identify the model with the optimal score, which consistently approximates the mode of the posterior distribution over the space of the considered models.

Let $\mathbf{X}=(x_{ij})_{i=1,j=1}^{d, n}$ be a matrix consisting of $n$ exchangeable observations of a $d$-dimensional random vector, assuming no missing data. In general, when performing inference in order to ascertain an optimal dependence structure the score function would equal the posterior probability, defined as
\[
P(G_{L} \mid \mathbf{X})=\frac{P(G_{L},\mathbf{X})}{P(\mathbf{X})}=\frac{P(\mathbf{X} \mid G_{L})P(G_{L})}{\sum_{G_{L}\in\mathcal{G}}P(\mathbf{X} \mid G_{L})P(G_{L})},
\]
where $P(G_{L})$ is a prior distribution on the space of SGs, denoted by $\mathcal{G}$, and $P(\mathbf{X} \mid G_{L})$ is the marginal likelihood calculated as the expectation $\int_{\Theta_{G_{L}}}P(\mathbf{X} \mid \theta_{G_{L}},G_{L}) P(\theta_{G_{L}}) d \theta_{G_{L}}$ of the likelihood with respect to the prior distribution of the parameters of $G_{L}$. Since analytical calculation of the marginal likelihood appears intractable for stratified graphs, in contrast to ordinary graphs under conjugate priors \citep{Dawid93}, we use a consistent approximation of the log marginal likelihood based on the maximum likelihood function under the restrictions imposed by $G_{L}$ combined with the BIC penalty function:
\begin{equation}
\log P(\mathbf{X} \mid G_{L})\approx\log l(\mathbf{X} \mid G_{L})-\frac{k(G_{L})}{2}\log n=S(G_{L} \mid \mathbf{X}),
\label{score}
\end{equation}
here $l(\mathbf{X} \mid G_{L})$ is the maximized value of the likelihood function induced by $G_{L}$ and $k(G_{L})$ is the cardinality of the parameter space induced by $G_{L}$. The likelihood function can be expressed in the form
\[
\prod_{j=1}^{n}g_{\Sigma}(x_{j})=\prod_{j=1}^{n}\frac{1}{Z}\sum_{r=1}^{\rho}f_{\Sigma^{(r)}}(x_{j})I_{c^{(r)}}(x_{j}),
\]
here $x_{j}$ is the $j$th column of $\mathbf{X}$. Using the Bayesian information criterion approximation, the problem reduces to finding the maximum likelihood estimate of the model parameters for any given candidate of the dependence structure. For ordinary decomposable graphs the maximum likelihood estimate $\hat{\Sigma}$ of $\Sigma$ is analytically tractable, and it is relatively simple to obtain an estimate even for non-decomposable graphs. A straightforward approach is to first calculate the maximum likelihood estimate of the covariance without imposing any constraints and then enforce the constraints of the graph by using iterative proportional fitting. However, this method is not directly applicable to SGs, since the dataset is partitioned, with each partition associated with its own dependence structure.

Currently we are not aware of any method for analytically calculating $\hat{\Sigma}$ for SGs. Instead, we exploit a method that cyclically
optimizes one element $\sigma_{ij}$ at a time until sufficient numerical convergence is reached for the whole structure $\Sigma$. Importantly, not all elements in the covariance structure need to be optimized in this manner, since they will in each instance $\Sigma^{(1)},\ldots,\Sigma^{(\rho)}$ be determined by certain other elements in $\Sigma$. To illustrate this, consider the example with the SG in Figure \ref{same4}c for which the corresponding covariance matrix is shown below.

\begin{center}
\begin{tikzpicture}
\matrix [matrix of math nodes,left delimiter=(,right delimiter=)] (m)
{
\sigma_{11} & \sigma_{12} & \sigma_{13} & \sigma_{14} \\
\sigma_{12} & \sigma_{22} & \sigma_{23} & \sigma_{24} \\
\sigma_{13} & \sigma_{23} & \sigma_{33} & \sigma_{34} \\
\sigma_{14} & \sigma_{24} & \sigma_{34} & \sigma_{44} \\
};
\draw[color=red] (m-3-1.south west) -- (m-3-2.south west) -- (m-4-2.south west) -- (m-4-1.south west) -- (m-3-1.south west);
\draw[color=red] (m-1-3.north east) -- (m-1-4.north east) -- (m-2-4.north east) -- (m-2-3.north east) -- (m-1-3.north east);
\draw[color=blue] (m-3-3.south east) -- (m-3-4.south east) -- (m-4-4.south east) -- (m-4-3.south east) -- (m-3-3.south east);
\draw[color=blue] (m-1-1.north west) -- (m-1-4.north west) -- (m-4-4.north west) -- (m-4-1.north west) -- (m-1-1.north west);
\draw[color=green] (m-1-3.south east) -- (m-1-4.south east) -- (m-4-4.north east) -- (m-4-3.north east) -- (m-1-3.south east);
\draw[color=green] (m-3-1.south east) -- (m-3-4.south west) -- (m-4-4.south west) -- (m-4-1.south east) -- (m-3-1.south east);
\end{tikzpicture}
\end{center}

The elements inside the blue rectangles are those that will be identical for all covariance matrices $\Sigma,\Sigma^{(1)},\ldots,\Sigma^{(\rho)}$. This follows from the fact that iterative proportional fitting never changes elements on the diagonal, i.e. $\sigma_{ii}$, nor does it change an element $\sigma_{ij}$ if there is an edge between nodes $i$ and $j$ in the graph. Since the elements inside the blue rectangles correspond to pairs of nodes connected by edges with no associated strata, these elements will be identical in $\Sigma^{(1)},\ldots,\Sigma^{(\rho)}$.

The elements inside the green rectangles corresponds to pairs of nodes connected by stratified edges, meaning that these elements will be changed in some of the covariance matrices $\Sigma^{(1)},\ldots,\Sigma^{(\rho)}$. The elements in the red rectangles correspond to pairs of nodes that are not connected by an edge, which means that they can take different values in all of the covariance matrices. The value of these elements in $\Sigma^{(1)},\ldots,\Sigma^{(\rho)}$ will be completely determined by the other elements and it is therefore unnecessary to include them in the optimization process. The numerical optimization of each element $\sigma_{ij}$ is carried out as follows.

\begin{enumerate}
\item Choose the value of two scalars $\epsilon< \delta$.

\item Evaluate the likelihood function using both values $\sigma_{ij}\pm\epsilon/2$ as candidates of $\sigma_{ij}$. If $\sigma_{ij} + \epsilon/2$ yields a higher likelihood than $\sigma_{ij}$, set $d=1$, otherwise if $\sigma_{ij}-\epsilon/2$ yields higher likelihood, set $d=-1$. If both $\sigma_{ij}\pm\epsilon/2$ produce inferior results or results in $\Sigma$ being negative semi-definite, stop the optimization process for $\sigma_{ij}$.

\item Evaluate the likelihood function using $\sigma_{ij}^{\ast}=\sigma_{ij} + d \times \delta$ as a candidate of $\sigma_{ij}$. If this improves the likelihood function, set $\sigma_{ij}=\sigma_{ij}^{\ast}$, otherwise set $\delta=\max(\epsilon/2,\delta/2)$ and repeat step 3.

\item Repeat steps 2 - 4.
\end{enumerate}
It was determined above that some covariance elements will be completely determined by the other elements and need not be included in the optimization process. These elements will, however, affect whether or not $\Sigma$ is positive definite or not. Therefore, it will be necessary to, at the end of each cycle, transform $\Sigma$ to comply with the restrictions induced by the underlying graph.

Additionally, a criterion to determine whether or not $\Sigma$ has converged needs to be defined. One possible approach would be to terminate the procedure when a whole cycle has been completed without changing any of the elements in $\Sigma$. Another, more pragmatic definition which we apply in our illustrations, depends on the improvement in the likelihood function after each cycle. When the resulting improvement in the likelihood function during a complete cycle is less than a predefined tolerance value, the estimation procedure is terminated. Given an appropriate starting value, for example, the sample covariance matrix, the time needed for convergence is tractable for a moderate number of variables.

The parameter space for multivariate normal distributions corresponding to the complete graph spanning $d$ variables contains $(d^{2}+d)/2+d$ free parameters, equaling the sum of parameters found in $\Sigma$ and $\mu$. Although we have restricted $\mu$ to zero, this corresponds to a preprocessing of the data and therefore the contribution of $\mu$ to the number of free parameters is included. Removing an edge from the graph corresponds to removing a free parameter from the parameter space. This can be seen from the precision matrix as each absent edge in the graph corresponds to a conditional independence of the type found in \eqref{prec0}, which in turn corresponds to forcing an element to equal zero in the precision matrix. Using this method we can deduce the number of free parameters induced by the underlying graph of the SG, however, we also need to take into account the number of parameters needed to define the set of strata included in the SG. The number of these parameters can be specified by studying the structure of a stratum. Each set of the form in \eqref{stratumSet} requires the introduction of $2 \times |L_{\{\delta,\gamma\}}|$ new parameters, where $|L_{\{\delta,\gamma\}}|$ denotes the number of nodes adjacent to both $\delta$ and $\gamma$. Let $|\mathcal{L}_{\{\delta, \gamma\}}|$ denote the number of sets of form \eqref{stratumSet} used to define $\mathcal{L}_{\{\delta, \gamma\}}$ and let $|E^{c}|$ denote the difference between the number of edges in $G$ and the corresponding complete graph. The cardinality of the parameter space induced by an SG then equals
\[
k(G_{L})=\frac{d^{2}+d}{2}+d-|E^{c}|+\sum_{\delta=1}^{d-1}\sum_{\gamma=\delta+1}^{d}|\mathcal{L}_{\{\delta,\gamma\}}| \times 2 \times |L_{\{\delta,\gamma\}}|.
\]
As an example, for the SG in Figure \ref{same4}c the cardinality of the parameter space equals
\[
k(G_{L})=(4^{2}+4)/2+4-1+1 \times 2 \times 1+1 \times 2 \times 1=17.
\]
Given the obtained maximum likelihood estimate $\hat{\Sigma}$ and the cardinality of the parameter space induced by the SG, equation \eqref{score} can be used to approximate the posterior probability for SGGMs.

\subsection{Non-reversible Markov chain Monte Carlo search for SGGMs}
The learning algorithm described below belongs to the class of non-reversible Metropolis-Hastings algorithms, introduced by \cite{Corander06} and later further generalized and applied to learning of graphical models in \cite{Corander08}. Let $\mathcal{M}$ denote the finite space of models over which the aim is to identify the model with the optimal score. For $M\in\mathcal{M}$, let $Q(\cdot \mid M)$ denote the proposal function used to generate a new candidate model given any model $M$. Under the generic conditions stated in \cite{Corander08}, the probability assigned to any particular candidate by $Q(\cdot \mid M)$ need not be explicitly calculated or known, as long as it remains unchanged over the iterations of the algorithm and the resulting chain satisfies the condition that all states can be reached
from any other state in a finite number of steps. Assume that the model learning is initialized by a model $M_{0}$. At iteration $t=1,2,...$ of the algorithm, $Q(\cdot \mid M_{t-1})$ is used to generate a candidate model $M^{\ast}$, which is accepted with the probability
\[
\min\left(  1,\frac{P(M^{\ast})P(\mathbf{X} \mid M^{\ast})}{P(M_{t-1})P(\mathbf{X} \mid M_{t-1})}\right)  \label{accept}
\]
where $P(M)$ is a prior probability assigned to $M$ and $P(\mathbf{X} \mid M)$ is the marginal likelihood of the dataset $\mathbf{X}$ given $M$.

Contrary to reversible Markov Chains, for non-reversible Markov chains the posterior probability $P(M \mid \mathbf{X})$ is not approximated by the stationary distribution. Instead, a consistent approximation of $P(M \mid \mathbf{X})$ is obtained by considering the space of distinct models $\mathcal{M}_{t}$ visited by time $t$ such that
\[
\hat{P}(M \mid \mathbf{X})=\frac{P(\mathbf{X} \mid M)P(M)}{\sum_{M\in\mathcal{M}_{t}}P(\mathbf{X} \mid M)P(M)}.
\]
\cite{Corander08} proved under rather weak conditions that this estimate is consistent, i.e.
\[
\hat{P}(M \mid \mathbf{X})\overset{a.s.}{\rightarrow}P(M \mid \mathbf{X}),
\]
as $t\rightarrow\infty$. Since our main interest lies in finding the posterior optimal model, i.e.
\[
\arg\mathop{\max}_{M\in\mathcal{M}}P(M \mid \mathbf{X}).
\]
it will suffice to identify
\[
\arg\mathop{\max}_{M\in\mathcal{M}}P(\mathbf{X} \mid M)P(M).
\]
Throughout this article we use a uniform prior distribution over the model space, which further simplifies the search algorithm as the prior then cancels out in all the formulas. As an approximation, we will replace the marginal likelihood with the score function $S(G_{L} \mid \mathbf{X})$, resulting in a stochastic search for the model with optimal score. The proposal function used in our algorithm is available in Appendix B.

As the model space consisting of all maximal regular SGs grows extremely fast in relation to the number of nodes in the system, identifying a good initial state $M_{0}$ for the non-reversible Markov chain is of importance. A viable initial state is found by first conducting a search for the optimal undirected graph and then performing a search for strata separately for each edge included in this graph. Combining the graph with the resulting strata, or a subset of the strata in case the entire set results in a non maximal regular SG, provides a reasonable initial state using less computationally demanding operations compared to an algorithm that can traverse the entire space of maximal regular SGs.

\section{Illustrations}
We start by re-visiting the mathematic marks dataset mentioned in Section \ref{secHeadSGGM}. Conducting a search for the optimal GGM using the framework described in the previous section, but modified to only consider traditional GGMs, results in a model with the graph displayed in Figure \ref{mathGGM}. This model has the score of $-1731.33$. However, by enlarging the model space to also include SGGMs, the optimal model identified has the score $-1730.21$, the corresponding stratified graph is shown in Figure \ref{mathSGGM}.
\begin{figure}[h]
\begin{center}
\includegraphics{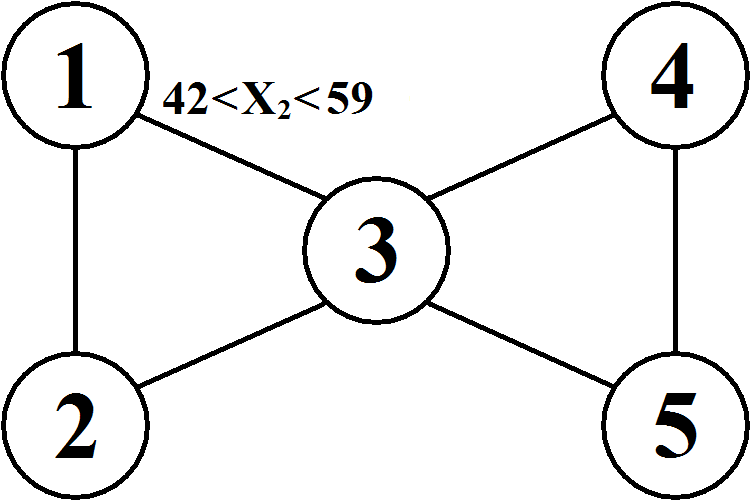}
\end{center}
\caption{Optimal SG for mathematic marks data.}
\label{mathSGGM}
\end{figure}
This SGGM, while having the same conditional dependence structure as the optimal GGM, also incorporates the context-specific independence $X_{1} \perp X_{3} \mid X_{2} \in (42,59)$. The marks for $X_{2}$ range from $9$ to $82$ with the interval $(42,59)$ composing 39 observations. Considering the clique $\{1, 2, 3\}$ along with the entire dataset the partial correlation between variables $X_{1}$ and $X_{3}$ equals $0.3171$. However, if we only consider the subset of data where $X_{2} \in (42,59)$ the corresponding value equals $-0.0017$. Clearly, the included stratum manages to identify a subset of data where $X_{1}$ and $X_{3}$ are, in practice, conditionally independent given $X_2$.

In order to show that our scoring and search methods perform as intended, we generate a synthetic dataset following a distribution where the dependence structure can be represented by the SG in Figure \ref{data8}. The specific distribution is available in Appendix C.
\begin{figure}[h]
\begin{center}
\includegraphics{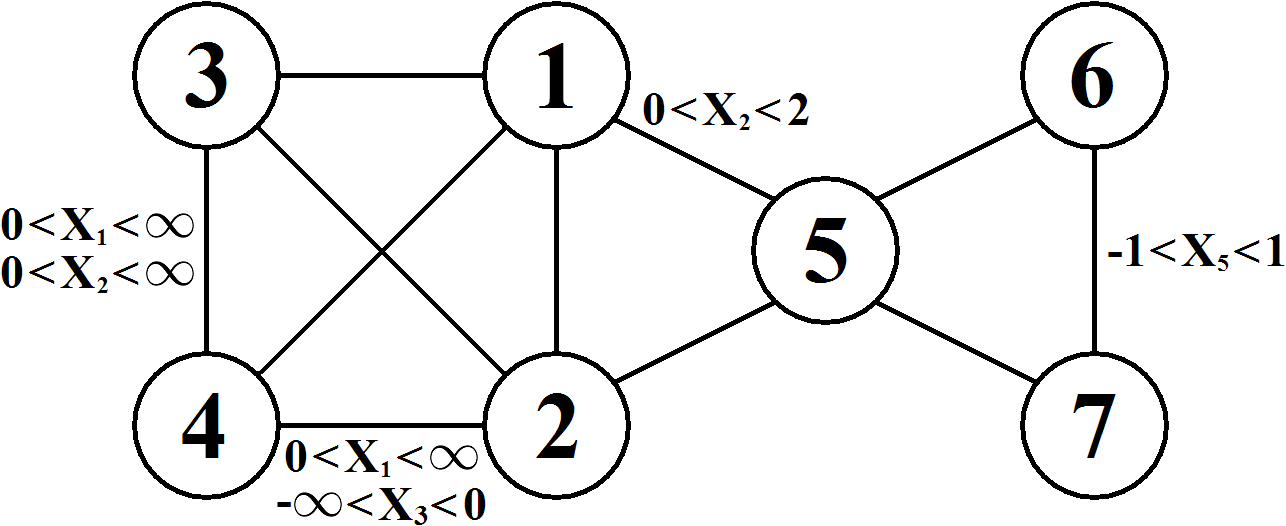}
\end{center}
\caption{Dependence structure of the distribution used to generate synthetic data.}
\label{data8}
\end{figure}
The model space for SGs is in theory infinitely large as the endpoints of a stratum are defined by continuous variables. However, when trying to fit an SG to a dataset, the model space can be considered finite as, for instance, a condition $0< X_{\zeta} <a$ will in practice be the same as $0< X_{\zeta} <b$ if $a\leq b$ and there exist no observations for which $X_{\zeta} \in \lbrack a,b)$. Nevertheless, the model space for SGs is still astronomically large when compared to the model space for ordinary graphs. Consequently, in order to preform solid inference for SGs we generally expect that larger datasets are required than for ordinary graphs.

In our experiments, a dataset containing more than $1,000$ observations would generally yield an inferred model very close to the generating model. The most challenging part is to correctly identify the endpoints for the interacting strata associated to the edges $\{2, 4\}$ and $\{3,4\}$, since minor changes in them only lead to subtle changes in the dependence structure.
\begin{figure}[h]
\begin{center}
\includegraphics[width=0.9\textwidth]{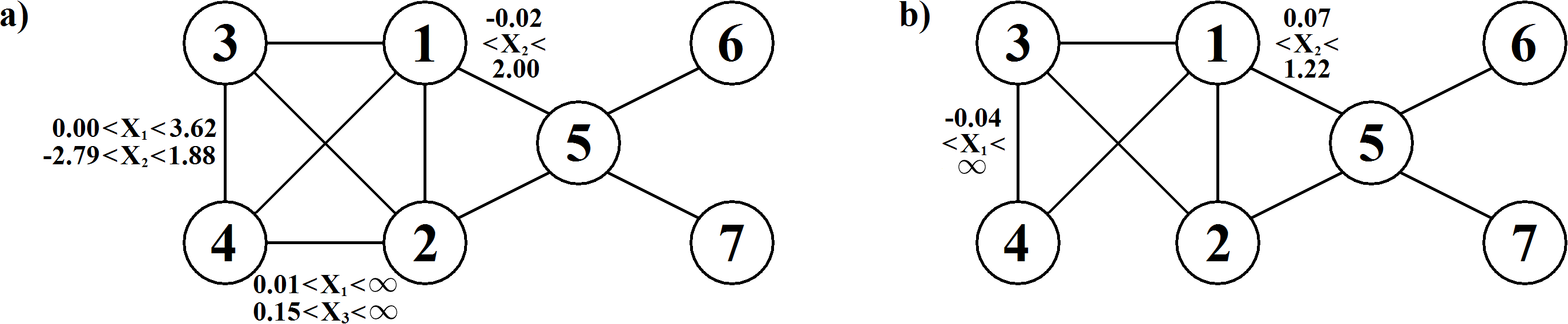}
\end{center}
\caption{SGs with optimal score for datasets containing 1,000 observations (a) and 100 observations (b).}
\label{data8_Ex}
\end{figure}
Several synthetic datasets of $1,000$ and $100$ observations were generated. From these a single representative dataset of each size was chosen to demonstrate how a small number of data points limits the power to infer the generating graph. The dependence structure of the SGGMs with optimal score are displayed in Figure \ref{data8_Ex}. The scores of the generating model for the two datasets are $-6149.06$ and $-674.11$, respectively. The scores for the optimal models are $-6134.03$ and $-656.88$, respectively. Comparing these models with the generating model one can see considerable similarities, however, some discrepancies are also present.

The edge $\{6, 7\}$, which in the generating model is a stratified edge, is missing in both of the optimal models. This can be explained by the amount of observations that belong to the outcome space where the stratum is satisfied. The variance of $X_{5}$, which is the variable defining the stratum, is slightly larger than $1$, which implies that roughly $70\%$ of the observations on $X_{5}$ will reside within the interval $(-1,1)$ satisfying the stratum. This can be compared to the approximately $50\%$ of observations belonging to the stratum associated to the edge $\{1, 5\}$, which is present in both of the optimal models. In other words, for a large majority of observations $X_{6}$ and $X_{7}$ are conditionally independent given $X_{5}$. This in combination with the fact that in the context where $X_{6}$ and $X_{7}$ are conditionally dependent given $X_{5}$, the absolute value of $X_{5}$ is large and will thus have a larger impact on $X_{7}$ than will $X_{6}$. In combination these circumstances will lead to the edge $\{6, 7\}$ being relatively weak, requiring a large amount of data in order to be conclusively supported since the scoring criterion will penalize and attempt to filter out weak associations from the models.

Even for the larger dataset the optimal endpoints in the strata $\mathcal{L}_{\{2,4\}}$ and $\mathcal{L}_{\{3,4\}}$ can differ from those specified in the generating model. However, the actual implications on the dependence structure are not that extreme. For the smaller dataset the edge $\{2, 4\}$ is not present in the optimal model, this can again be explained by the relative weakness of the edge, often resulting in either the edge $\{2, 4\}$ or $\{3, 4\}$ being omitted from the optimal model in the multiple realizations analyzed. The exclusion of one of these edges greatly simplifies the task of finding a suitable stratum for the remaining edge. In summary, even for relatively small datasets, the search method produces a fairly accurate approximation to the generating model. It is, however, evident that in order to capture more subtle dependencies a larger amount of data will be required.

Next we will consider two real datasets involving gene expression data and protein expression data. However, in order to do this it will first be necessary to slightly modify the score function in \eqref{score}. When the number of variables considered in a model is large, Gaussian graphical models have a tendency to be quite dense, often overestimating the number of included edges \citep{Foygel10}. To compensate for this issue \citet{Foygel10} suggested the inclusion of an additional penalty term in the BIC resulting in the \textit{extended BIC}. This corresponds to modifying our score function according to
\begin{equation}
S_{\text{exp}}(G_{L} \mid \mathbf{X}) = \log l(\mathbf{X} \mid G_{L})-\frac{k(G_{L})}{2}\log n -\frac{|E|}{2}\log \kappa |\Delta|,
\label{expScore}
\end{equation}
where $|E|$ denotes the number of edges found in $G_L$, $|\Delta|$ the number of nodes, and $\kappa$ is a tuning parameter. Choosing a suitable value of $\kappa$ will result in a graph with optimal comprehensibility, as it is very difficult to determine strong dependencies from graphs that are very dense and, contrary, graphs with very few edges might leave out fairly strong dependencies. \citet{Foygel10} showed that the extended BIC is asymptotically equivalent to BIC as the number of observations, $n$, goes to infinity. When considering SGGMs operating with fairly sparse graphs facilitates the inference of strata as it is easier to identify viable strata in smaller cliques than in larger cliques. Therefore, the use of the extended BIC can also be extremely useful when inferring SGGMs for large systems.

For the gene expression data 15 variables were randomly chosen from the dataset found in \citet{Hiissa09} containing 335 microarray observations that were quantile normalized prior to our analysis as described in the data source. The considered genes are available in Appendix D. For this dataset the extended BIC score using $\kappa = 2.5$ was used. The resulting optimal SG, shown in Figure \ref{gene}, has a score of $-16967.38$.
\begin{figure}[h]
\begin{center}
\includegraphics[width=0.45\textwidth]{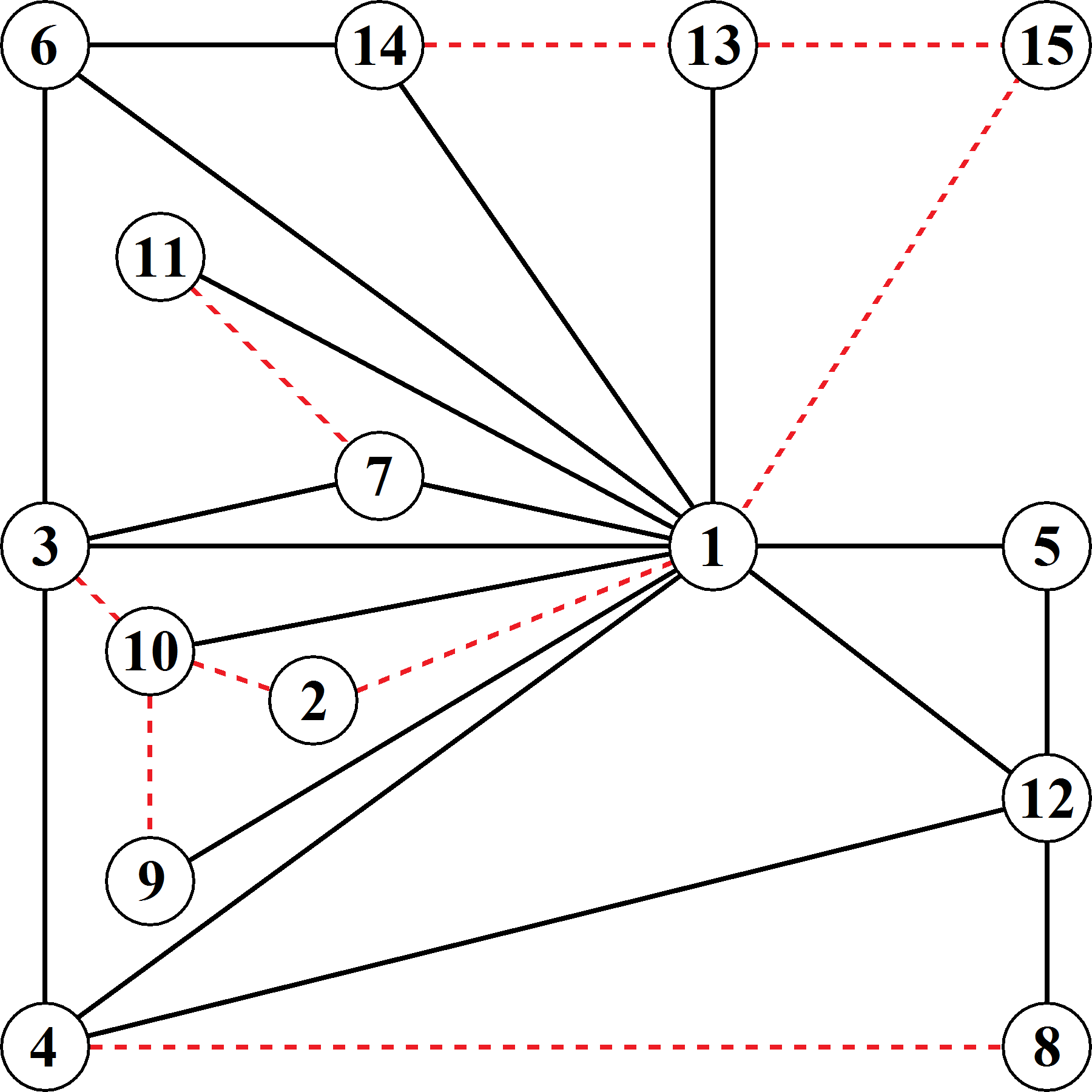}
\end{center}
\caption{Optimal SG for gene expression data.}
\label{gene}
\end{figure}
Instead of giving the strata explicitly the stratified edges are shown using dashed lines. The SG contains 27 edges and nine stratified edges. The underlying graph of the SG is very similar to the graph used in the optimal GGM with the only difference being that the edge $\{2, 10\}$ is included in the SG but not in the ordinary graph. The graph of the optimal GGM has a score of $-17027.92$. If we again consider a single clique, for instance $\{1, 7, 11\}$ containing the context-specific independence $X_{7} \perp X_{11} | X_1 \in (96.37, 118.49)$, the partial correlation between $X_{7}$ and $X_{11}$ is $0.4021$ for the entire dataset. Considering only the data where $X_1 \in (96.37, 118.49)$ the corresponding value is $0.0925$, i.e. considerably closer to 0 compared to the value for the entire data.  

The protein data is taken from \citet{Kornblau09} and contains 256 observations on 51 variables. The optimal SG is displayed using the adjacency matrix in Figure \ref{protein}, the circles represent stratified edges, the triangles edges to which strata could be added while still retaining a decomposable SG, and the squares edges that cannot be stratified in a decomposable SG.
\begin{figure}[h]
\begin{center}
\includegraphics[width=0.45\textwidth]{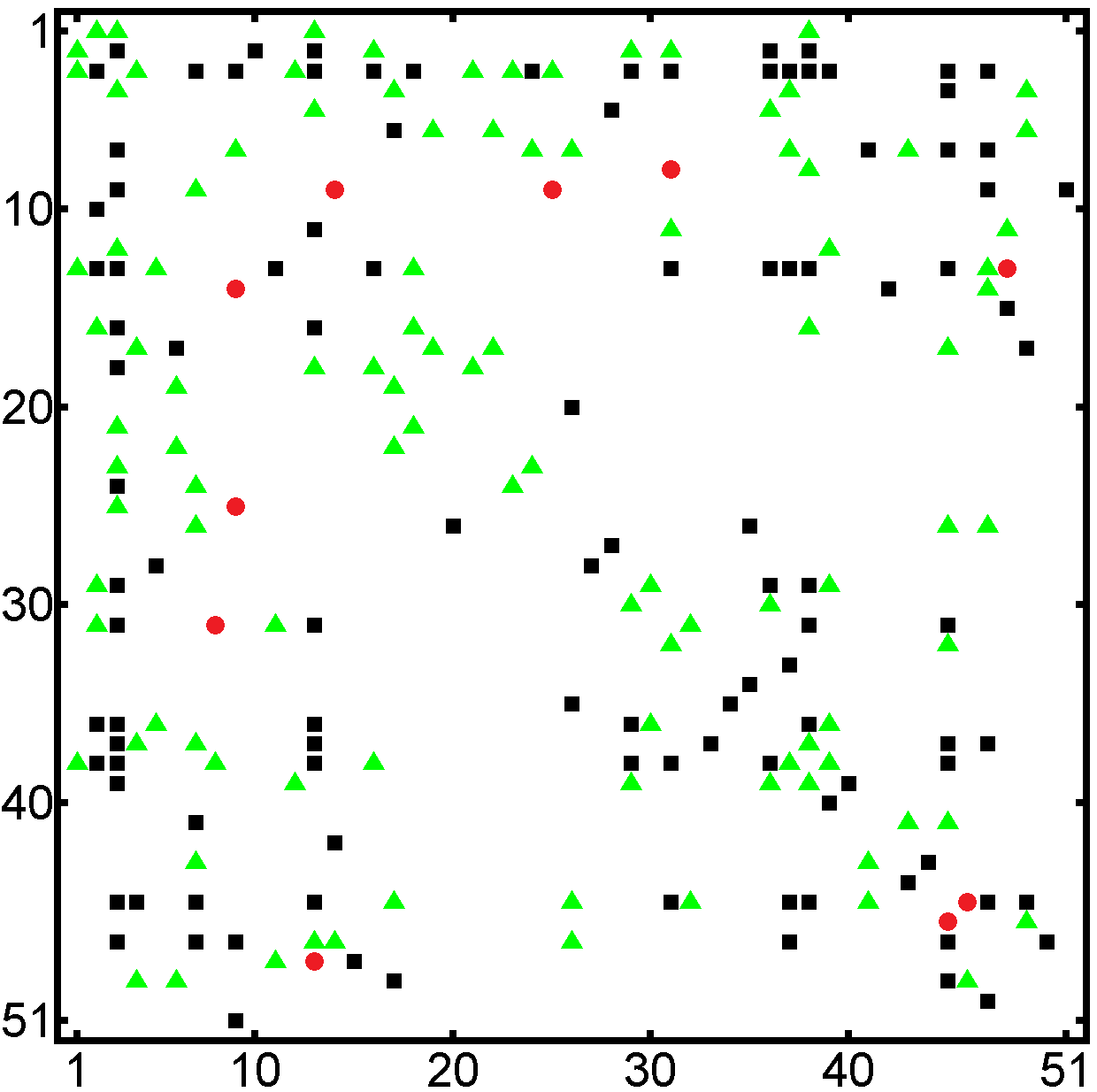}
\end{center}
\caption{Optimal SG for protein expression data displayed using an adjacency matrix.}
\label{protein}
\end{figure}
For this dataset the extended BIC score using $\kappa = 0.5$ was used. The underlying graph of the inferred SG coincides with the graph of the optimal GGM. However, the SG also contains five stratified edges improving the score from $-12548.47$ for the ordinary graph to $-12532.40$.

\section{Discussion}
Gaussian graphical models have gained popularity for a multitude of applications involving analysis of multivariate continuous systems, in analogy with their counterparts for discrete variables. For discrete graphical models several approaches have been proposed for generalizing the dependence structure by local or context-specific independencies such that more flexible model classes are obtained which can reveal additional hidden independencies in data. To the best of our knowledge, such an approach has not been previously adopted for Gaussian graphical models. Using the concept of stratification of the joint outcome space, introduced by \citet{Nyman13} for discrete graphical models, we showed here that context-specific independence generalizes well to the multivariate Gaussian distribution, such that both interpretability and inferential tractability are preserved. Despite of our stratification of the outcome space, the resulting models are not typical mixture-type latent class models, for which inference is notoriously challenging, especially in the multivariate setting considered here. An interesting further generalization of the SGGM class would be to consider an adaptation to directed Gaussian graphical models, for which Bayesian learning has been recently considered in \cite{Consonni12}. A potential solution to obtaining such a generalization would be to employ the concept of labeled directed acyclic graphs, introduced for discrete-valued systems by \citet{Pensar13}. Valuable additional insight to the applicability of SGGMs could also be obtained by developing faster inference tools suitable for the analysis of large continuous systems.

\section*{Acknowledgement}
H.N. and J.P. were supported by the Foundation of \AA bo Akademi University, as part of the grant for the Center of Excellence in Optimization and Systems Engineering. J.C. was supported by the ERC grant no. 239784 and academy of Finland grant no. 251170.

\section*{Appendix A}
\begin{proof}[Proof of Theorem~\ref{theoremEF}]
It is a well established fact that multivariate normal distributions and distributions in GGMs belong to the exponential family. To show that the distribution in a decomposable SGGM belongs to the curved exponential family, we consider first a simple example and then the general case to provide clearer intuition for the reader. The density function of a multivariate normal distribution with zero mean can, using $x_i$ to denote the $i$th element of the column vector $x$, be written as
\[
f_{\Sigma}(x)=(2\pi)^{-d/2}|K|^{1/2}e^{-\frac{1}{2}x^T Kx}=\frac{e^{-\frac{1}{2}\sum_{i=1}^{d}x_{i}^{2}k_{ii}-\sum\sum_{i<j}x_{i}x_{j}k_{ij}}}{(2\pi)^{d/2}|K|^{-1/2}},
\]
which is in the exponential family form with $h(x)=(2\pi)^{-d/2}$, $\psi(\Sigma)=\log(|K|^{-1/2})$,
\[
\eta(\Sigma)=(k_{11},\ldots,k_{dd},k_{12},\ldots,k_{(d-1)d}),
\]
and
\[
T(x)=(-\frac{1}{2}x_{1}^{2},\ldots,-\frac{1}{2}x_{d}^{2},-x_{1}x_{2},\ldots,-x_{d-1}x_{d}).
\]
Consider now the SG depicted in Figure \ref{simpleSGGM}. Following from \eqref{SGGMpdf} a density function following the dependence structure defined by this SG factorizes as
\begin{equation}
g_{\Sigma}(x)=\frac{1}{Z}((2\pi)^{-d/2}|K^{(1)}|^{1/2}e^{-\frac{1}{2}x^T K^{(1)} x}I_{x_{1}\leq0}+(2\pi)^{-d/2}|K^{(2)}|^{1/2}e^{-\frac{1}{2}x^T K^{(2)} x}I_{x_{1}>0}).
\label{pdf1}
\end{equation}
The covariance matrices $\Sigma^{(1)}$ and $\Sigma^{(2)}$ generating the precision matrices $K^{(1)}$ and $K^{(2)}$, respectively, are identical to $\Sigma$ except for the element $\sigma_{2,3}^{(2)}$ (and $\sigma_{3,2}^{(2)}$) which is modified such that the corresponding value in the precision matrix $k_{2,3}^{(2)}$ equals zero. The equation in \eqref{pdf1} can be re-written as
\[
g_{\Sigma}(x)=\frac{1}{Z}\left(  \frac{e^{-\frac{1}{2}\sum_{i=1}^{d}x_{i}^{2}k_{ii}^{(1)}-\sum\sum_{i<j}x_{i}x_{j}k_{ij}^{(1)}}}{(2\pi)^{d/2}|K^{(1)}|^{-1/2}}I_{x_{1}\leq0}+\frac{e^{-\frac{1}{2}\sum_{i=1}^{d}x_{i}^{2}k_{ii}^{(2)} - \sum\sum_{i<j}x_{i}x_{j}k_{ij}^{(2)}}}{(2\pi)^{d/2}|K^{(2)}|^{-1/2}}I_{x_{1}>0}\right).
\]
Noting that $Z$ is determined by $\Sigma$ we can see that this density defines a distribution in the curved exponential family by setting $h(x)=(2\pi)^{-d/2}$, $\psi(\Sigma)=\log(Z)$,
\begin{align*}
\eta(\Sigma)=( &  k_{11}^{(1)},\ldots,k_{dd}^{(1)},k_{12}^{(1)},\ldots, k_{(d-1)d}^{(1)},\log(|K^{(1)}|^{1/2}),\\
&  k_{11}^{(2)},\ldots,k_{dd}^{(2)},k_{12}^{(2)},\ldots,k_{(d-1)d}^{(2)}, \log(|K^{(2)}|^{1/2})),
\end{align*}
and
\begin{align*}
T(x)=( &  -\frac{1}{2}f_{1}^{(1)}(x)^{2},\ldots,-\frac{1}{2}f_{d}^{(1)}(x)^{2},-f_{1}^{(1)}(x)f_{2}^{(1)}(x),\ldots,-f_{d-1}^{(1)}(x)f_{d}^{(1)}(x),I_{x_{1}\leq0}, \\
&  -\frac{1}{2}f_{1}^{(2)}(x)^{2},\ldots,-\frac{1}{2}f_{d}^{(2)}(x)^{2}, -f_{1}^{(2)}(x)f_{2}^{(2)}(x),\ldots,-f_{d-1}^{(2)}(x)f_{d}^{(2)}(x),I_{x_{1}>0}).
\end{align*}
The functions $f_{i}^{(1)}(x)$ and $f_{i}^{(2)}(x)$ are defined as
\newline
\begin{minipage}[t]{0.6\textwidth}
\[
f_i^{(1)}(x)=
\begin{cases}
x_i, & \text{if } x_1 \leq 0 \\
0, & \text{if } x_1 > 0
\end{cases}
\]
\end{minipage}\hspace{-2cm} \begin{minipage}[t]{0.4\textwidth}
\[
f_i^{(2)}(x)=
\begin{cases}
x_i, & \text{if } x_1 > 0 \\
0, & \text{if } x_1 \leq 0
\end{cases}
\]
\end{minipage}\newline The general case is proven following the same approach as in the example above. The density function is written as
\[
g_{\Sigma}(x)=\frac{1}{Z}\sum_{r=1}^{\rho}\frac{e^{-\frac{1}{2}\sum_{i=1}^{d}x_{i}^{2}k_{ii}^{(r)} - \sum\sum_{i<j}x_{i}x_{j}k_{ij}^{(r)}}}{(2\pi)^{d/2}|K^{(r)}|^{-1/2}}I_{c^{(r)}}(x),
\]
which again can be identified as a member of the curved exponential family by setting $h(x)=(2\pi)^{-d/2}$, $\psi(\Sigma)=\log(Z)$,
\begin{align*}
\eta(\Sigma)=( &  k_{11}^{(1)},\ldots,k_{dd}^{(1)},k_{12}^{(1)},\ldots, k_{(d-1)d}^{(1)},\log(|K^{(1)}|^{1/2}),\ldots,\\
&  k_{11}^{(\rho)},\ldots,k_{dd}^{(\rho)},k_{12}^{(\rho)},\ldots, k_{(d-1)d}^{(\rho)},\log(|K^{(\rho)}|^{1/2})),
\end{align*}
and
\begin{align*}
T(x)=( &  -\frac{1}{2}f_{1}^{(1)}(x)^{2},\ldots,-\frac{1}{2}f_{d}^{(1)}(x)^{2},-f_{1}^{(1)}(x)f_{2}^{(1)}(x),\ldots,-f_{d-1}^{(1)}(x)f_{d}^{(1)}(x),I_{c^{(1)}}(x),\ldots, \\
&  -\frac{1}{2}f_{1}^{(\rho)}(x)^{2},\ldots,-\frac{1}{2}f_{d}^{(\rho)}(x)^{2},-f_{1}^{(\rho)}(x)f_{2}^{(\rho)}(x),\ldots,-f_{d-1}^{(\rho)}(x)f_{d}^{(\rho)}(x),I_{c^{(\rho)}}(x)).
\end{align*}
The functions $f_{i}^{(r)}(x)$ are defined similarly to above as
\[
f_{i}^{(r)}(x)=
\begin{cases}
x_{i}, & \text{if $c^{(r)}$ is satisfied,}\\
0, & \text{otherwise.}
\end{cases}
\]
This establishes that the distribution in a decomposable SGGM belongs to the curved exponential family.
\end{proof}

\section*{Appendix B}
The following proposal function is used to generate a candidate graph $G_{L}^{\ast}$ given the current graph $G_{L}$. The function is composed of five operators one of which is randomly chosen at each iteration:
\begin{enumerate}
\item Add or delete a randomly chosen edge in the underlying graph $G$, while ensuring that this operation does not result in a non-decomposable $G$. This operation may violate the compatibility between $G$ and some of the strata in $L$. As a result some strata may need to be altered or removed. A stratum $\mathcal{L}_{\{\delta,\gamma\}}$ is removed if the edge $\{\delta, \gamma\}$ is included in a separator, if it is included in a clique containing less then three nodes, or if the edge is not present in the new underlying graph. Let $L^O_{\{\delta,\gamma\}}$ and $L^N_{\{\delta,\gamma\}}$ denote the set of nodes adjacent to both $\delta$ and $\gamma$ in the old and new underlying graph, respectively. The stratum $\mathcal{L}_{\{\delta,\gamma\}}$ is altered if $L^O_{\{\delta,\gamma\}} \neq L^N_{\{\delta,\gamma\}}$. If a node $\zeta$ is present in $L^O_{\{\delta,\gamma\}}$ but not present in $L^N_{\{\delta,\gamma\}}$ the conditions $a_{\zeta} < X_{\zeta} < b_{\zeta}$ are removed from $\mathcal{L}_{\{\delta,\gamma\}}$. Alternatively, if $\zeta$ is present in $L^N_{\{\delta,\gamma\}}$ but not in $L^O_{\{\delta,\gamma\}}$ the condition $-\infty < X_{\zeta} < \infty$ is added to $\mathcal{L}_{\{\delta,\gamma\}}$. Merging two cliques by adding an edge can lead to the negation of the property that all stratified edges in a clique have at least one node in common, which is an essential property of decomposable SGs. If this occurs the least possible amount of strata is removed in order to restore the property. Additionally, if an edge is added, a random stratum may be appended to the edge in accordance with the procedure in step 2.

\item Add a random stratum to a randomly chosen eligible edge in $G$. An edge is defined as eligible if adding a stratum to the edge does not result in a non-decomposable SG. This operation is performed by first randomly choosing an eligible edge ${\{\delta,\gamma\}}$. For each node $\zeta \in L_{\{\delta,\gamma\}}$ draw two values uniformly from the interval $(-2 \sigma_{\zeta}, 2 \sigma_{\zeta})$, where $\sigma_{\zeta}$ is the standard deviation of $X_{\zeta}$ calculated from the data $\textbf{X}_{\zeta}$. Use $a_{\zeta}$ and $b_{\zeta}$ to denote the smaller and larger value, respectively. If $a_{\zeta} < \min (\textbf{X}_{\zeta})$ set $a_{\zeta} = -\infty$ and analogously, if $b_{\zeta} > \max (\textbf{X}_{\zeta})$ set $b_{\zeta} = \infty$. Using the attained values the new stratum can be written in the form of equation (3) of the main manuscript. 

\item Remove a randomly chosen stratum from $L$.

\item Change the lower and upper limits of all strata associated to edges in a randomly chosen clique of $G$. This is done by randomly choosing a clique in $G$ and for each stratum associated to an edge belonging to this clique change the stratum's upper and lower limits. Let $a_{\zeta}$ and $b_{\zeta}$ be the lower and upper limits in such a stratum for the variable $X_{\zeta}$. If $a_{\zeta} = -\infty$ set $a_{\zeta} = -3 \sigma_{\zeta}$, draw a value $q$ uniformly from the interval $(-0.5 \sigma_{\zeta}, 0.5 \sigma_{\zeta})$ and set $a^*_{\zeta} = a_{\zeta} + q$, repeat this procedure for $b_{\zeta}$. If $a^*_{\zeta} > b^*_{\zeta}$ switch the values, insuring that $a^*_{\zeta} < b^*_{\zeta}$. If $a^*_{\zeta} < \min (\textbf{X}_{\zeta})$ set $a^*_{\zeta} = -\infty$ and analogously, if $b^*_{\zeta} > \max (\textbf{X}_{\zeta})$ set $b^*_{\zeta} = \infty$. Set the new lower and upper limits for $X_{\zeta}$ in the stratum to be $a^*_{\zeta}$ and $b^*_{\zeta}$, respectively.

\item Remove a randomly chosen stratum from $L$, then add a random stratum to a randomly chosen eligible edge following the procedure defined in step 2.
\end{enumerate}
The probabilities with which the operations are selected can be chosen quite freely. However, in order to guarantee that each state can be reached from any other state, operations 1-3 need to be selected with a strictly positive probability. The resulting candidate is transformed to its maximal regular counterpart, if no such counterpart exists a new candidate is generated. This transformation may lead to a change in the number of free parameters and subsequently a change in the score. In other cases the transformation can remove ambiguity concerning graphs with different appearance inducing identical dependence structures.

\section*{Appendix C}
The following conditional distributions, for variables $X_{1}$-$X_{7}$ given in Table \ref{tabCond}, are used to define a probability distribution following the dependence structure induced by the SG in Figure \ref{data8}. The stochastic variables $Y_{1}$-$Y_{9}$ follow the standard normal distribution.
\begin{table}[h]
\begin{center}
\begin{tabular}{clr}
\hline
Variable & Condition & Conditional Distribution \\
\hline
$X_{1}$ &  & $X_{1} \sim N(0,1)$ \\
$X_{2}$ &  & $X_{2} = (X_{1} + Y_{1}) / \sqrt{2}$ \\
$X_{3}$ &  & $X_{3} = (X_{1} + X_{2} + Y_{2}) / \sqrt{3}$ \\
$X_{4}$ & $X_{1} \in (0,\infty)$ $\wedge$ ($X_{2} \in(0,\infty)$ $\vee$ $X_{3} \in(-\infty,0)$) & $X_{4} = (X_{1} + Y_{3})/\sqrt{2}$\\
& $X_{1} \in(-\infty,0]$ $\vee$ ($X_{2} \in(-\infty,0]$ $\wedge$ $X_{3} \in[0,\infty)$) & $X_{4} = (X_{1} + X_{2} + X_{3} + Y_{4})/\sqrt{4}$\\
$X_{5}$ & $X_{2} \in(0,2)$ & $X_{5} = (X_{2} + Y_{5}) / \sqrt{2}$\\
& $X_{2} \in(-\infty,0]$ $\vee$ $X_{2} \in[2,\infty)$ & $X_{5} = (X_{1} + X_{2} +Y_{6})/\sqrt{3}$\\
$X_{6}$ &  & $X_{6} = (X_{5} + Y_{7})/\sqrt{2}$\\
$X_{7}$ & $X_{5} \in(-1,1)$ & $X_{7}=(X_{5} + Y_{8})/\sqrt{2}$\\
& $X_{5} \in(-\infty,-1]$ $\vee$ $X_{5} \in[1,\infty)$ & $X_{7}=(X_{5} + X_{6} + Y_{9})/\sqrt{3}$ \\
\hline
\end{tabular}
\caption{Conditional distributions used to specify a probability distribution following the dependence structure of the SG in Figure \ref{data8}.}
\label{tabCond}
\end{center}
\end{table}

\section*{Appendix D}
The following genes and corresponding indexes are used in the gene expression data example.
\begin{table}[h]
\begin{center}
\begin{tabular}{cccccc}
\hline
Index & Gene & Index & Gene & Index & Gene \\
\hline
1 & AS3MT & 6 & DCN & 11 & HS.387405 \\
2 & C10orf4 & 7 & DENND2D & 12 & HS.403212  \\
3 & C20orf144 & 8 & NSUN7 & 13 & HS.500666 \\
4 & CHAD & 9 & FLJ27255 & 14 & HS.520628  \\
5 & CSTL1 & 10 & FRMD5 & 15 & HS.537675 \\
\hline
\end{tabular}
\caption{Genes included in gene expression data example.}
\label{tabGene}
\end{center}
\end{table}

\bibliographystyle{henrik}
\bibliography{SGGMbib}

\end{document}